\numberwithin{equation}{section}
\newtheorem{remark}{Remark}[section]
\newtheorem{lemma}{Lemma}[section]
\newtheorem{theorem}{Theorem}[section]
\newtheorem{corollary}{Corollary}[section]
\newtheorem{proposition}{Proposition}[section]
\def\RR{\mathbb R}
\def\EE{\mathcal E}
\def\a{\alpha}
\def\b{\beta}
\def\diag{{\rm diag}}
\def\argmin{{\rm arg}\!\min}
\def\be{\begin{equation}}
\def\ee{\end{equation}}
\def\bea{\begin{eqnarray}}
\def\eea{\end{eqnarray}}
\newtheorem{assumption}{Assumption}[section]
\newcommand{\mc}[1]{\mathcal{#1}}
\newcommand{\la}{\langle}
\newcommand{\ra}{\rangle}
\newcommand{\lp}{\left(}
\newcommand{\rp}{\right)}
\newcommand{\diff}{{\rm d}}
\DeclareMathOperator{\Diag}{Diag}
\DeclareMathOperator{\amin}{argmin}
\title{Binary interaction methods for high dimensional global optimization and machine learning}
\author{Alessandro Benfenati\footnote{University of Milan, Department of Environmental Science and Policy (alessandro.benfenati@unimi.it)}
\and Giacomo Borghi\footnote{RWTH Aachen University, Department of Mathematics (borghi@eddy.rwth-aachen.de)}
\and Lorenzo Pareschi\footnote{University of Ferrara, Department of Mathematics and Computer Science (lorenzo.pareschi@unife.it)}}
\newcommand{\AB}[1]{\textcolor{black}{#1}}
\newcommand{\rev}[1]{\textcolor{black}{#1}}
\newcommand{\revlp}[1]{\textcolor{black}{#1}}
\newcommand{\revn}[1]{\textcolor{blue}{#1}}
\renewcommand{\emph}[1]{{\it #1}}
\renewcommand{\epsilon}{\varepsilon}
\begin{document}
\maketitle

\begin{abstract}
In this work we introduce a new class of gradient-free global optimization methods based on a binary interaction dynamics governed by a Boltzmann type equation. In each interaction the particles act taking into account both the best microscopic binary position and the best macroscopic collective position. \rev{For the resulting kinetic optimization methods, convergence to the global minimizer is guaranteed for a large class of functions under appropriate parameter constraints that do not depend on the dimension of the problem.} In the mean-field limit we show that the resulting Fokker-Planck partial differential equations generalize the current class of consensus based optimization (CBO) methods.  Algorithmic implementations inspired by the well-known direct simulation Monte Carlo methods in kinetic theory are derived and discussed. Several examples on prototype test functions for global optimization are reported including \rev{an application} to machine learning.
\end{abstract}

{\bf Keywords}: gradient-free methods, global optimization, Boltzmann equation, mean-field limit, consensus-based optimization, machine learning.
\tableofcontents

\section{Introduction}
A new class of numerical methods for global optimization based on particle dynamics has been introduced in some recent articles\cite{pinnau2017consensus,carrillo2018analytical,carrillo2019consensus,fhps20-1,fhps20-2,fhps20-3,mtw20,carrillo2021consensus}. These methods, referred to as consensus based optimization (CBO) methods for the similarities between the particle dynamics in the minimizer and consensus dynamics in opinion formation, fall within the large class of metaheuristic methods 
\cite{Aarts:1989:SAB:61990,Back:1997:HEC:548530,Blum:2003:MCO:937503.937505,Gendreau:2010:HM:1941310}. Among popular metaheuristic methods we recall the simplex heuristics \cite{NeldMead65}, evolutionary programming \cite{Fogel:2006:ECT:1202305},  the  Metropolis-Hastings sampling algorithm \cite{hastings70}, genetic algorithms \cite{Holland:1992:ANA:531075}, particle swarm optimization (PSO) \cite{kennedy2010particle,poli2007particle}, ant colony optimization (ACO) \cite{dorigo2005ant}, simulated annealing (SA) \cite{holley1988simulated,kirkpatrick1983optimization}. 

In contrast to classic metaheuristic methods, for which it is quite difficult to provide rigorous convergence to global minimizers (especially for those methods that combine instantaneous decisions with memory mechanisms), CBO methods, thanks to the instantaneous nature of the dynamics permit to exploit mean-field techniques to prove global convergence for a large class of optimization problems \cite{carrillo2018analytical,carrillo2019consensus,fhps20-2,fornasier2021consensusbased}. Despite their simplicity CBO methods seem to be powerful and robust enough to tackle many interesting high dimensional non-convex optimization problems of interest in machine learning \cite{carrillo2019consensus,fhps20-2,jinew20}. 

As shown in \cite{carrillo2019consensus,fhps20-2} in practical applications the methods benefit from the use of small batches of interacting particles since the global collective decision mechanism may otherwise lead the model to be more easily trapped in local minima. For these CBO methods based on small batches, however, a robust mathematical theory is still missing. We mention also that, recently, a continuous description of PSO methods based on a system of stochastic differential equations was proposed in \cite{gp20} and its connections with CBO methods analyzed through the corresponding mean-field descriptions. Rigorous results concerning the mean-field limit \revlp{of PSO methods and the corresponding CBO dynamics} have been subsequently presented in \cite{hui20}. \revlp{We refer the reader to the recent surveys \cite{grassi2021meanfield, totzeck2021trends} for a more complete overview.}

Motivated by this, in the present paper we \rev{introduce} a new class of kinetic theory based optimization (KBO) methods algorithmically solved by particle dynamics to address the following  optimization problem
\begin{equation}\label{typrob}
v^\star \in \argmin\limits_{v\in \RR}\EE(v)\,,
\end{equation}
where $\EE(v):\mathbb R^{d} \to \mathbb R$ is a given continuous cost functions, which we wish to minimize. \revn{In the following, we will assume that the minimizing argument $v^\star$ of \eqref{typrob} exists and is unique.}

Both statistical estimation and machine learning consider the problem of minimizing an objective function in the form of a sum
\begin{equation}\label{machlear}
\EE(v)=\frac1{n}\sum_{i=1}^n \EE_i(v),
\end{equation}
where each summand function $\EE_i$ is typically associated with the  $i$-observation in the data set, for example used for training \cite{bishop06}.
In statistics, the problems of minimizing the sum occur in least squares, in the estimation of the highest probability (for independent observations), and \rev{more general} in $M$-estimators \cite{fumio00}. The problem of sum minimization also arises for the minimization of empirical risk in statistical learning \cite{vapnik91}. In this case, $\EE_i$ is the value of the loss function at $i$-th example, and $\EE$ is the empirical risk.

In many cases, the summand functions have a simple form that enables inexpensive evaluations of the sum-function and the sum gradient. First order methods, such as (stochastic) {gradient descent methods}, are preferred both because of speed and scalability and because they are considered generically able to escape the trap of critical points. However, in other cases, evaluating the sum-gradient may require expensive evaluations of the gradients and/or some of the functions may be noisy or discontinuous. Additionally, most gradient-based optimizers are not designed to handle multi-modal problems or discrete and mixed discrete-continuous design variables. Gradient-free methods, such as the metaheuristics approaches mentioned before, may therefore represent a valid alternative. 

In contrast to previous CBO approaches, where the dynamic was of mean-field type, the new KBO methods are based on binary interactions between agents which can estimate the best position \rev{according} to a combination of a local interaction and a global alignment process. Binary interactions are inspired by similar processes of social alignment in kinetic models for opinion formation,  \revlp{where agents modify their opinions according to a process of local compromise with other agents and the global influence of external media \cite{albiandco2016opinion, albi2017mean, partos13,BENFENATI2013979,Benfenati14,HVP,APZN}.} The corresponding dynamic is therefore described by a multidimensional Boltzmann equation that is solved by adapting the well-known direct simulation Monte Carlo methods \cite{Bird, Nanbu, PRMC} to the present case. We emphasize that, the resulting schemes present some analogies with the recently introduced random batch methods in the case of small batches of size two \cite{AlPa, JLJ, KHJD}.

In particular, we show that, in a suitable scaling derived from the quasi-invariant limit in opinion dynamic, the corresponding mean-field dynamic is governed by CBO methods. Noticeably, the resulting CBO methods generalize the classical CBO approach in \cite{pinnau2017consensus,carrillo2019consensus} by preserving memory of the microscopic interaction dynamic. As shown by the numerical experiments, an interesting aspect in this direction is that the kinetic optimization model is able to capture the global minimum even in the case where there is no global alignment process, as in the original CBO models, but only a local alignment process where information is shared only between pairs of particles.  

The rest of the paper is organized as follows. In the next \rev{section,} we introduce the kinetic model and the corresponding Boltzmann equation. Section \ref{S3} is then devoted to analyze the main properties of the kinetic model and to consider a suitable scaling limit which permits to derive the analogous mean-field optimizers of CBO type. \rev{Convergence to the global optimum for KBO methods is then studied in Section 4, where we demonstrate exponentially fast convergence to the minimum, with a constraint on the parameters independent of the dimension for binary interactions with anisotropic noise. Finally, in Section \ref{S5}  we present several numerical experiments including an application to a machine learning problem}. Some concluding remarks are then given at the end of the manuscript.

\section{A kinetic model for global optimization}
\label{S2}
\revlp{In analogy to some key concepts of metaheuristic optimization methods based on particle dynamics, in the following we introduce an optimization process based on binary interaction dynamics inspired by kinetic models in social sciences described by spatially homogeneous Boltzmann-type equations (see \cite{partos13}).}
To this aim, let us denote by $f(v,t) \geq 0$, $v\in\RR^d$ the distribution of particles \rev{in position $v$} at time $t \geq 0$. \revn{Note that, by analogy with the classical space homogeneous Boltzmann description, we kept the  notation $v$. However, we chose to refer to this as 'position' in the search space instead of 'velocity' to employ a standard terminology in optimization algorithms.}
Without loss of generality we assume $\int_{\RR^d} f(v,t)\,dv=1$, so that $f(v,t)$ is a probability density function.
\subsection{The binary interaction process}   
For a given pair \revlp{of particles with positions} $(v,v_*)$ we consider a binary interaction process \revlp{generating the new positions $(v',v'_*)$ according to relations}
\begin{equation}
\begin{split}
v' &= v + \lambda_1(v_{\beta,\EE}(v,v_*)-v)+\lambda_2(v_{\alpha,\EE}(t)-v)+\sigma_1 D_1(v,v_*)\xi_1+\sigma_2 D_2(v)\xi_2 \\
v_*' &= v_* + \lambda_1(v_{\beta,\EE}(v_*,v)-v_*)+\lambda_2(v_{\alpha,\EE}(t)-v_*)+\sigma_1 D_1(v_*,v)\xi^*_1+\sigma_2 D_2(v_*)\xi^*_2 
\end{split}
\label{eq:binp}
\end{equation}
where $v_{\beta,\EE}(v,v_*)$, $\beta > 0$, is the {\it microscopic local estimate} of the best position
\begin{equation}\label{WbetaE}
v_{\beta,\EE}(v,v_*) = \frac{\omega_\beta^\EE(v) v + \omega_\beta^\EE(v_*)v_*}{\omega_\beta^\EE(v)+\omega_\beta^\EE(v_*)}\,, \qquad  \omega_\beta^\EE(v):=e^{-\beta\EE(v)},
\end{equation}
and $v_{\alpha,\EE}(t)$, $\alpha > 0$, is the {\it macroscopic global estimate} of the best position 
\begin{equation}\label{ValphaE}
v_{\alpha,\EE}(t)=\frac{\int_{\mathbb R^{d}}v\omega_\alpha^\EE(v)f(v,t)\,dv}{\int_{\mathbb R^{d}}\omega_\alpha^\EE(v)f(v,t)\,dv}\,, \qquad  \omega_\alpha^\EE(v):=e^{-\alpha\EE(v)}\,.
\end{equation}
 The choice of the weight function $\omega_\alpha^\EE$ in \eqref{ValphaE} comes from  the  well-known Laplace principle \cite{miller2006applied,Dembo2010,pinnau2017consensus}, a classical asymptotic method for integrals, which states that for any probability $f(v,t)$, it holds
\begin{equation}\label{Laplace}
\lim\limits_{\alpha\to\infty}\left(-\frac{1}{\alpha}\log\left(\int_{\RR^d}e^{-\alpha\EE(v)}f(v,t)\,dv \right)\right)=\inf\limits_{v\,\in\, {\rm supp}\, f(v,t)} \EE(v)\,.
\end{equation}
Similarly, in \eqref{WbetaE} as $\beta\to\infty$ the value $v_{\beta,\EE}(v,v_*)$ concentrates on the particle in the best position, namely
\begin{equation}
\lim_{\beta \to \infty} v_{\beta,\EE}(v,v_*) = \rev{ \underset{w \in \{v,v_*\}}\amin\,\EE(w)  \,,}
\label{eq:concentrate}
\end{equation}
\rev{if $\EE(v) \neq \EE(v_*)$.}
Note that, $v_{\beta,\EE}(v,v_*)$ depends on the interacting pair $(v,v_*)$, whereas $v_{\alpha,\EE}(t)$ is the same for all particles. \revlp{These quantities characterize two different dynamics where on one hand the particle pair aligns locally to $v_{\beta,\EE}(v,v_*)$ in agreement with their weighted best position and on the other hand it aligns globally to $v_{\alpha,\EE}(t)$ according to the weighted best position among all  particles.}

In \eqref{eq:binp} the scalar values $\lambda_k\geq 0$ and $\sigma_k \geq 0$, $k=1,2$ define, respectively, the strength of the relative alignment and diffusion processes, whereas the terms $\xi_k, \xi^*_k\in\RR^d$, $k=1,2$ are  vectors of i.i.d. random variables (\revlp{with arbitrary distribution}) with zero mean and unitary variance. Finally, $D_k(\cdot,\cdot)$, $k=1,2$ denote $d\times d$ dimensional diagonal matrices characterizing the stochastic exploration process. Isotropic exploration has been introduced in \cite{pinnau2017consensus} and is defined by
\begin{equation}
D_1(v,v_*)=\rev{|}v_{\beta,\EE}(v,v_*)-v\rev{|} I_d,\qquad D_2(v)=\rev{|}v_{\alpha,\EE}(t)-v\rev{|} I_d,
\label{eq:iso}
\end{equation}
with $I_d$ \rev{denoting} the $d$-dimensional identity matrix \rev{and $|\cdot|$ the euclidian norm}, whereas in the anisotropic case, introduced in \cite{carrillo2019consensus}, we have 
\begin{equation}
\begin{split}
D_1(v,v_*)&={\diag}\left\{(v_{\beta,\EE}(v,v_*)-v)_1,\ldots,(v_{\beta,\EE}(v,v_*)-v)_d\right\},\\ D_2(v)&={\diag}\left\{(v_{\alpha,\EE}(t)-v)_1,\ldots,(v_{\alpha,\EE}(t)-v)_d\right\}.
\label{eq:aniso}
\end{split}
\end{equation}  

\subsection{A Boltzmann description}
\revlp{A fundamental aspect in the derivation of the corresponding evolution equation of the probability density of particles $f(v,t)$ is to determine the so-called Boltzmann collision term describing the instantaneous variations in the particles distribution. This derivation results exclusively from the binary interactions between particles given by \eqref{eq:binp} that are assumed to be uncorrelated prior to the interaction. Under this assumption, known as \emph{molecular chaos}, the collision term can be written as a multidimensional integral over the product of the distribution functions of a particle (see \cite{cer88,cer94} for further details)}. 

Thus, formally, the particle distribution satisfies a Boltzmann-type equation, \revlp{which can be conveniently written in weak form as}
\begin{equation}
\begin{split}
\frac{\partial}{\partial t} \int_{\RR^d} f(v,t)\phi(v)\,dv &= \frac12\left\langle\int_{\RR^{2d}}\left(\phi(v')+\phi(v'_*)-\phi(v)-\phi(v_*)\right)f(v,t)f(v_*,t)\,dv\,dv_*\right\rangle\\
&= \left\langle\int_{\RR^{2d}}\left(\phi(v')-\phi(v)\right)f(v,t)f(v_*,t)\,dv\,dv_*\right\rangle
\end{split}
\label{eq:Boltz}
\end{equation}
where $\phi(v)\in \rev{C}^\infty(\RR^{d})$ is a smooth function, such that
\[
\lim_{t\to 0}\int_{\RR^d} \phi(v)f(v,t)\,dv = \int_{\RR^d} \phi(v) f_0(v)\,dv
\]
with $f_0(v)$ the initial density satisfying  
\[
\int_{\RR^d} f_0(v)\,dv =1.
\]
In \eqref{eq:Boltz} we use the standard notation
\begin{equation}
\left\langle g(\xi)\right\rangle = \int_{\RR^{4d}}g(\xi)p(\xi)\,d\xi,
\end{equation}
where we used the shortcut $\xi=(\xi_1,\xi_2,\xi_1^*,\xi_2^*)$,
to denote the mathematical expectation with respect to the \rev{i.i.d.} random vectors $\xi_k, \xi^*_k$, $k=1,2$, entering the definitions of $v'$ and $v'_*$ in \eqref{eq:binp}. \revlp{As a consequence $p(\xi)=p_\xi(\xi_1)p_\xi(\xi_2)p_\xi(\xi^*_1)p_\xi(\xi^*_2)$, where $p_\xi(\cdot)$ is the common probability density function of the random vectors. }

\revlp{The Boltzmann interaction term in \eqref{eq:Boltz} quantifies the variation in the probability density, at a given time, of particles that modify their position from $v$ to $v'$ (r.h.s with negative sign) and particles that  change their value from  $v'$ to $v$ (r.h.s. with positive sign). Here, {the} expectation $\langle \cdot \rangle$ takes into account the presence of the random parameters in the microscopic interaction \eqref{eq:binp}.}

First of all, let us remark that from the binary dynamic \eqref{eq:binp} we get
\begin{equation}
\begin{split}
\langle v'+v'_*\rangle &= (1-\lambda_1-\lambda_2) (v+v_*)+2\lambda_1 v_{\beta,\EE} + 2 \lambda_2 v_{\alpha,\EE}(t),\\
\langle v'-v'_*\rangle &= (1-\lambda_1-\lambda_2) (v-v_*).
\end{split}
\end{equation}
The first equality describes the variation in the expected value of the particles positions. The second, under the assumption $\lambda_1+\lambda_2 \leq 1$,  refers to the tendency of the interaction to decrease (in mean) the distance between \revlp{positions} after the interaction. This tendency is a universal consequence of the rule \eqref{eq:binp}, in that it holds whatever distribution one assigns to $\xi$, namely to the random variable which accounts for the exploration effects.

\revlp{Before entering into a detailed analysis of the model, let us fix some notations. Throughout the paper, we will denote with $m$ and $E$ the first two moments of $f(v,t)$ 
\be 
\rev{m(t) := \int_{\RR^d} v \,f(v,t)\,dv \,,  \quad E(t) :=   \int_{\RR^d}  |v|^2\, f(v,t)\,dv\, , }
\label{eq:moments}
\ee
\rev{and the variance as}
\be
\rev{V(t) := \frac 12 \int_{\RR^d} | v- m(t)|^2\, f(v,t)\,dv = \frac12 \left(  E(t) - |m(t)|^2\right) \,.}
\label{eq:variance}
\ee
Furthermore, we will assume $\kappa$ to be a constant equal to the dimension $d$ if the isotropic exploration \eqref{eq:iso} is considered, and equal to one when the anisotropic exploration \eqref{eq:aniso} is employed.}

\section{Main properties and mean-field limit}
\label{S3}

\subsection{The case with only the microscopic best estimate}
Let us first consider the case where in the binary interaction rules \eqref{eq:binp} we assume $\lambda_2=0$ and $\sigma_2=0$. This case is particularly interesting since the dynamics is fully microscopic and therefore convergence to the global minimum will emerge from a sequel of  binary interactions which are not influenced by any macroscopic information concerning the global minimum. 

The binary interactions  can be rewritten as
\begin{equation}
\begin{split}
v' &= v + \lambda\gamma^\EE_\beta(v,v_*)(v_*-v)+\sigma D(v,v_*)\xi_1 \\
v_*' &= v_* + \lambda\gamma^\EE_\beta(v_*,v)(v-v_*)+\sigma D(v_*,v)\xi^*_1 
\end{split}
\label{eq:binp2}
\end{equation}
where, for \rev{notational} simplicity, we have set $\lambda=\lambda_1$, $\sigma=\sigma_1$, $D(v,v_*)=D_1(v,v_*)$ and
\[
\gamma^\EE_\beta(v,v_*) = \frac{\omega_\beta^\EE(v_*)}{\omega_\beta^\EE(v)+\omega_\beta^\EE(v_*)}\,.
\]
Note that, $\gamma^\EE_\beta(v,v_*)+\gamma^\EE_\beta(v_*,v)=1$, and, since $\gamma^\EE_\beta(v,v_*)\in (0,1)$, the expected support of the \revlp{positions} for $\lambda \leq 1$ is decreasing 
\[
|\langle v' \rangle| \leq (1-\lambda\gamma^\EE_\beta(v,v_*)) |v| + \lambda\gamma^\EE_\beta(v,v_*)|v_*| < \max\left\{|v|,|v_*|\right\}.
\]
Consider now, the time evolution of the \rev{expected position $m(t)$}. We have from the weak formulation \eqref{eq:Boltz} for $\phi(v)=v$
\be
\begin{split}
\rev{\frac{d m(t)}{dt}}  &= \rev{\left \la \int_{\RR^{2d}} ( v' - v) f(v,t)f(v_*,t)\,dv_*\,dv   \right \ra}
\\
&= \lambda \int_{\RR^{2d}} \gamma^\EE_\beta(v,v_*)(v_*-v)f(v,t)f(v_*,t)\,dv_* dv\\
& =  2\lambda \int_{\RR^{2d}}\gamma^\EE_\beta(v,v_*)f(v,t)f(v_*,t)v_*\,dv_*\,dv-\lambda m(t),
\end{split}
\label{eq:mom}
\ee
\rev{where we made use of the fact that $\gamma_\beta^\EE(v, v_*) + \gamma_\beta^\EE(v_*,v) = 1$, from which follows}
\[\rev{
m(t) = \int_{\RR^{2d}} \gamma^\EE_\beta(v,v_*)(v_*-v)f(v,t)f(v_*,t)\,dv_* dv + \int_{\RR^{2d}} \gamma^\EE_\beta(v_*,v)(v_*-v)f(v,t)f(v_*,t)\,dv_* dv\,.
}\]
It is easy to verify that the above equation admits as steady state any Dirac delta distribution of the form $f^\infty(v)=\delta(v-\bar{v})$, since $\gamma^\EE_\beta(\bar{v},\bar{v})=1/2$, $\forall\,\, \bar{v}\in \RR^d$.  In general, any symmetric function $\gamma^\EE_\beta(v,v_*) \rev{=\gamma^\EE_\beta(v_*,v)}$ would preserve the average \revlp{position}, and it is therefore the asymmetric behavior of this function based on the choice of the best value in the binary interaction that will asymptotically lead  to the global minimum in the system. Note that, equation \eqref{eq:mom} is not closed. 

\rev{
In order to analyze the large time behavior of $f(v,t)$, we introduce the following boundedness assumption on $\EE(v)$.}

\begin{assumption}
\label{a:bound}
Let us assume $\EE(w)$ positive and for all $w \in \RR^d$
\[ \underline \EE := \inf_{v\in\RR^d}\EE(v) \leq \EE(w) \rev{\leq} \sup_{v \in \RR^d}\EE(v)=: \overline \EE \,. \]
\end{assumption}
Under this assumption, it is possible to show that, when the alignment and exploration strengths satisfy suitable conditions, the particle system concentrates as it evolves.

\begin{proposition} \label{p:locvar}
\rev{Let $f(v,t)$ be a weak solution of equation \eqref{eq:Boltz} with initial data $f_0$ and binary interaction described by the system \eqref{eq:binp2}. If $\EE$ satisfies Assumption \ref{a:bound} and
 $\beta$ is sufficiently large, it holds}
\rev{ 
\be
\frac{d V(t)}{dt} \leq - \left( \frac{\lambda}{C_{\beta,\EE}} - \lambda^2  - \sigma^2 \kappa\right)  V(t) \,,
\label{eq:dVbeta}
\ee
for all $t>0$, where $C_{\b,\EE} := e^{\b(\overline \EE -\underline \EE)}$.}

\end{proposition}
We start the proof by presenting an auxiliary result.

\begin{lemma}\label{lemmaxi} \rev{If $\beta$ is sufficiently large,} it holds
\be 
\left( \gamma_\b^\EE (v,v_*) \right)^2 \leq \rev{\left (1- \frac1{C_{\beta, \EE}} \right)} \gamma_{2\b}^\EE\,\rev{(v,v_*)} \,,
\ee
\rev{where $C_{\b,\EE} := e^{\b(\overline \EE -\underline \EE)}$.}
\end{lemma}

\begin{proof}

\rev{We start by rewriting $( \gamma_\b^\EE (v,v_*))^2 $ as}
\[
\begin{split}
\left( \gamma_\b^\EE (v,v_*) \right)^2 &=  \frac{e^{-2\b \EE(v_*)}}{\left(e^{-\beta \EE(v)} + e^{-\beta\EE(v_*)}\right)^2} =  
\frac{e^{-2\b \EE(v_*)}}{e^{-2\b \EE(v)} + e^{-2\beta\EE(v_*)}} 
\frac{e^{-2\b \EE(v)} + e^{-2\b\EE(v_*)}}{\left(e^{-\b \EE(v)} + e^{-\b\EE(v_*)}\right)^2}\\
& = \gamma_{2\b}^\EE\rev{(v,v_*)} \frac{e^{-2\b \EE(v)} + e^{-2\b\EE(v_*)}}{\left(e^{-\b \EE(v)} + e^{-\b\EE(v_*)}\right)^2} \rev{=:  \gamma_{2\b}^\EE(v,v_*)\zeta_\b^\EE(v,v_*)  }
\end{split}
\]
\rev{and further rewrite $\zeta_\beta^\EE(v,v_*) $ as}
\be \begin{split} \notag
\rev{\zeta_\b^\EE(v,v_*) } &= \frac{e^{-2\b \EE(v)} + e^{-2\b\EE(v_*)}}{\left(e^{-\b \EE(v)} + e^{-\b\EE(v_*)}\right)^2} = \frac{e^{-2\b \EE(v_*)}\left(1+ e^{-2\b( \EE(v) - \EE(v_*))} \right)}{e^{-2\b \EE(v_*)}\left(1+ e^{-\b( \EE(v) - \EE(v_*)) } \right)^2} \\
&\rev{= \frac{1+ e^{-2\b( \EE(v) - \EE(v_*))}}{\left(1+ e^{-\b( \EE(v) - \EE(v_*)) } \right)^2}\,.}
\end{split} \ee

\rev{One can verify that $\zeta_\b^\EE(v,v_*)$ attains its maximum value when the difference $|\EE(v) - \EE(v_*)|$ is maximized, from which follows} 
\rev{
\be\notag
\zeta_\b^\EE(v,v_*)\, \rev{\leq}\, \frac{1 + e^{-2\b (\overline \EE - \underline \EE)}} {\left(1+e^{-\b (\overline \EE - \underline \EE)} \right)^2} \rev{\,= \frac{1 + C^2}{(1+C)^2}\,,}
\ee}
\rev{where we denoted for simplicity $(C_{\beta,\EE})^{-1}=:C$. We note that $C \to 0$ as $\beta \to \infty$. Finally, as $\beta \to \infty$} 
\be
\rev{
\begin{split}\notag
1- \zeta_\beta^\EE(v,v_*) -  (C_\beta^\EE)^{-1} & = 
 1- \frac{1 + C^2}{(1+C)^2} - C = \frac{C + o(C) }{(1+C)^2} \geq 0,
\end{split}
}
\ee
\rev{if $\beta$ is sufficiently large. This proves the assertion.}

\end{proof}

\begin{proof}[Proof of Proposition \ref{p:locvar}]
\rev{From the definition of $E(t)$, and the weak formulation \eqref{eq:Boltz},} we can compute
\be
\begin{split}
\frac{d E(t)}{dt}
& =\left\langle\int_{\RR^{2d}}\left({\rev{|}v'\rev{|}}^2-\rev{|}v\rev{|}^2\right)f(v,t)f(v_*,t)\,dv\,dv_*\right\rangle\\
& = \lambda^2 \int_{\RR^{2d}} \gamma^\EE_\beta(v,v_*)^2\rev{|}v_*-v\rev{|}^2 f(v,t)f(v_*,t) \, dv\, dv_*\\
&+2\lambda \int_{\RR^{2d}} \gamma^\EE_\beta(v,v_*) v\rev{\cdot} (v_*-v)
 f(v,t)f(v_*,t) \, dv\, dv_*\\
&+  \sigma^2 \sum_{i=1}^d \int_{\RR^{2d}}  D_{ii}(v,v_*)^2 f(v,t)f(v_*,t) \, dv\, dv_*\,,
\label{eq:energy1}
\end{split}
\ee
\rev{where with $D_{ii}$ we denote the $i$th- diagonal element of the matrix $D$.}
From 
\[ \frac{d}{dt}V(t) = \frac{1}{2} \frac{d}{dt} \rev{E}(t) - m(t) \frac{d}{dt} m(t)\]
and the moment derivative \eqref{eq:mom}, we recover
\be
\begin{split}
\frac{d V(t)}{dt}
& =\left\langle\int_{\RR^{2d}}\left(\rev{|}v'\rev{|}^2-\rev{|}v\rev{|}^2\right)f(v,t)f(v_*,t)\,dv\,dv_*\right\rangle \rev{\,-\, m(t) \frac{d}{dt}m(t)}   \\
& = \frac{\lambda^2}{2} \int_{\RR^{2d}} \gamma^\EE_\beta(v,v_*)^2 \rev{|} v_*-v \rev{|}^2 f(v,t)f(v_*,t) \, dv\, dv_*\\
&+\lambda \int_{\RR^{2d}} \gamma^\EE_\beta(v,v_*) (v- m(t)) \rev{\cdot} (v_*-v)
 f(v,t)f(v_*,t) \, dv\, dv_*\\
&+\frac{\sigma^2}{2} \sum_{i=1}^d \int_{\RR^{2d}}  D_{ii}(v,v_*)^2 f(v,t)f(v_*,t) \, dv\, dv_* =: I_1 + I_2 + I_3
\end{split}
\ee 

\rev{Thanks to the relation, $\gamma_\beta^\EE(v,v_*) + \gamma_\beta^\EE(v_*,v) = 1$, we note that for any symmetric function $\psi(v, v_*)=\psi(v_*,v)$ it holds}
\be
\int_{\RR^{2d}} \gamma^\EE_\beta(v,v_*) \psi(v,v_*) f(v,t)f(v_*,t) \, dv\, dv_* = \frac 12 \int_{\RR^{2d}} \psi(v,v_*) f(v,t)f(v_*,t) \, dv\, dv_*\,.
\label{eq:symm}
\ee
\rev{It follows} that $I_1$ and $I_3$ can be bounded as 
\begin{align}
I_1 &\leq \frac{\lambda^2}{2} \int_{\RR^{2d}} \gamma^\EE_\beta(v,v_*) \rev{|}v_*-v\rev{|}^2 f(v,t)f(v_*,t) \, dv\, dv_*  =  \lambda^2  V(t) \label{eq:I1}\\
I_3 & \leq \frac{\sigma^2}{2} \kappa \int_{\RR^{2d}}\rev{ \gamma^\EE_\beta(v,v_*)} |v_* - v|^2 f(v,t)f(v_*,t) \, dv\, dv_*  \rev{=}  \sigma^2\kappa V(t)\,,
\label{eq:I2}
\end{align}
where \rev{we recall that} $\kappa=d$ in the isotropic case \eqref{eq:iso}, and $\kappa=1$ in the anisotropic case \eqref{eq:aniso}.  We compute by means on Young's inequality
\be
\begin{split}
I_2&=\lambda \int_{\RR^{2d}} \gamma^\EE_\beta(v,v_*) (v- m(t)) \rev{\cdot} (v_*-v)
 f(v,t)f(v_*,t) \, dv\, dv_*\\
&\leq- \lambda\int_{\RR^{2d}} \gamma_\b^\EE (v,v_*) |v-v_*|^2 f(v,t) f(v_*,t) dv dv_* + \frac{\lambda}{2} \int_{\RR^{2d}} |v_*-m(t)|^2 f(v,t) f(v_*,t) \,dv\, dv_* \\
&\quad+\frac{\lambda}{2} \int_{\RR^{2d}} \left( \gamma_{\b}^\EE (v,v_*)\right)^2 |v-v_*|^2 f(v,t) f(v_*,t) \,dv \,dv_*\,. 
\end{split}
\ee
\rev{By applying Lemma \ref{lemmaxi} one can bound the last term as
\[\int_{\RR^{2d}} \left( \gamma_{\b}^\EE (v,v_*)\right)^2 |v-v_*|^2 f(v,t) f(v_*,t) dv dv_* \leq \rev{\left(1- \frac1{C_{\beta, \EE}}\right)}\int_{\RR^{2d}} \rev{ \gamma_{2\b}^\EE (v,v_*)} |v-v_*|^2 f(v,t) f(v_*,t) dv dv_*\,.\] 
Finally, we use again relation \eqref{eq:symm} to obtain} 

\[
I_2 \leq -2 \lambda V(t) +\lambda V(t) +\lambda \rev{\left(1- \frac1{C_{\beta, \EE}}\right)} V(t) = -\frac{\lambda}{ C_{\b,\EE}} V(t)
\]
and hence, together with \eqref{eq:I1} and \eqref{eq:I2}, \rev{we get \eqref{eq:dVbeta}}.
\end{proof}

\rev{
\begin{corollary}
\label{c:1}
Under the assumptions of Proposition \ref{p:locvar}, if $\lambda$ and $\sigma$ satisfy the condition
\be
\frac{\lambda}{C_{\beta,\EE}} - \lambda^2  - \sigma^2 \kappa >0
\label{eq:cond}
\ee 
then there exits $\tilde v \in \RR^d$ such that 
$m(t) \rightarrow \tilde v$, $V(t) \to 0$ as $t \to \infty.$
\end{corollary}
\begin{proof}
By applying Grönwall's inequality to equation \eqref{eq:dVbeta}, we obtain the decay estimate
\be
V(t) \leq V(0) e^{-\mu t} \quad \text{with}\quad  \mu:=\frac{\lambda}{C_{\beta,\EE}} - \lambda^2  - \sigma^2 \kappa >0\,,
\ee
which implies $V(t) \to 0$ as $t \to \infty$.
From the weak formulation \eqref{eq:Boltz}, 
\[
\begin{split}
\left | \frac{d m(t)}{dt}\right| &= \left | \lambda \int_{\RR^2} \gamma_\b^\EE(v,v_*) (v_* - v) f(v,t)f(v_*,t)\,dv\,dv_*  \right |\\
& \leq \lambda \int_{\RR^2} | v_* - v| f(v,t)f(v_*,t)\,dv\,dv_* \\
& \leq \lambda \left( \int_{\RR^2} | v_* - v|^2 f(v,t)f(v_*,t)\,dv\,dv_*\right)^{\frac12} \leq 2\lambda \sqrt{V(t)} \leq 2\lambda \sqrt{V(0)}e^{-\frac{1}{2}\mu t}\,,
\end{split}
\]
where we used Jensen's inequality to have an estimate in terms of the variance. The above proves that $dm(t)/dt \in L^1(0, \infty)$ and, hence, that there exists a point $\tilde v\in \RR^d$ such that
\be
\tilde v = m(0) + \int_0^{\infty} \frac{d m(t)}{dt} \, dt = \lim_{t\to\infty} m(t)\,.
\ee
\end{proof}
}

\rev{
\begin{remark}
Clearly, the asymptotic value $\tilde v$ in general is not known. We will discuss in Section \ref{S4} appropriate conditions under which $\EE(\tilde v)$ can be considered a good approximation of $\inf_{v\in \RR^d} \EE(v)$. It should be noted that, condition \eqref{eq:cond} becomes rather restrictive for large values of $\beta$. However, in the mean-field scaling such a condition becomes less stringent as observed in Remark \ref{R33}.
Additionally, when both processes for localizing the minimum, microscopic best and macroscopic best, are activated simultaneously the convergence conditions are much less stringent and correspond to those of the macroscopic best dynamics as shown at the end of Section \ref{S4} (see Theorem \ref{t:3}). From a physical point of view, this reflects the tendency of the binary dynamics based on the microscopic best to favor exploration over concentration when compared to the corresponding binary dynamics based on macroscopic best.
\end{remark}}

\subsection{\rev{The case with only the macroscopic best estimate}}

The case where \rev{the macroscopic best estimate contributes alone} to the particle search dynamics can be analyzed following the same methodology of the previous section. 

The binary interactions \rev{now read}
\begin{equation}
\begin{split}
 \quad v'    &= v + \lambda(\rev{v_{\alpha,\EE}(t)}-v)+\sigma \rev{D(v)\xi_2} \\
 \quad v_*' &= v_* + \lambda(\rev{v_{\alpha,\EE}(t)}-v_*)+\sigma \rev{D(v_*)\xi^*_2}
\end{split}
\label{eq:binp3}
\end{equation}
where we have set $\lambda=\lambda_2$, $\sigma=\sigma_2$, $\rev{D(v)=D_2(v)}$ \rev{and $\lambda_1 = \sigma_1 = 0$}.

Again, \revlp{the expected position} is not conserved by the dynamics
\be
\rev{\frac{d m(t)}{dt}} = \lambda (\rev{v_{\alpha,\EE}(t)}-m(t)), 
\label{eq:mom2}
\ee
and describes a relaxation towards the estimated global minimum $\rev{v_{\alpha,\EE}(t)}$. 

\rev{As in the case with only microscopic interaction, we can derive an upper bound for the variance derivative.}

\begin{proposition}
\label{p:macrovar}
\rev{Let $\EE$ satisfy Assumption \ref{a:bound} and $f(v,t)$ be a weak solution of the Boltzmann equation \eqref{eq:Boltz} were the binary interaction is described by \eqref{eq:binp3} .}
For all $\a>0$ and $t>0$,  
\rev{
\be 
\frac{d V(t)}{dt} \leq- \left( 2 \lambda - 2\frac{e^{-\alpha\underline \EE}}{\|\omega_\a^\EE\|_{L^1(f(\cdot,t))}}  (\lambda^2 + \kappa\sigma^2) \right) V(t)\,.
\label{eq:dValpha}
\ee
}
\end{proposition} 
\begin{proof}

\rev{We start by noting that, according to \eqref{eq:binp3}},
\be
\rev{
\begin{split}
 \left\la |v'|^2 \right\ra &= \left\la | v + \lambda(\rev{v_{\alpha,\EE}}(t)-v)+\sigma D(v)\xi_2 |^2\right\ra \\
& =|v|^2 + \lambda^2 |\rev{v_{\alpha,\EE}}(t)-v|^2+2 \lambda\, v \cdot (\rev{v_{\alpha,\EE}}(t)-v) + \sigma^2 \sum_{i=1}^d D_{ii}(v)^2 \,,
\label{eq:deltaE_2}
\end{split}
}
\ee
\rev{where we used that the $\la \xi_2 \ra =0$ and $\la |\xi_2|^2 \ra= 1$. As before, we compute} 
\rev{
\begin{eqnarray}
\nonumber
\frac{dE(t)}{dt} &=& \left \la \int_{\RR^{2d}} (| v'|^2 - |v|^2) f(v,t) f(v_*,t)\,dv\, dv_* \right \ra 
\\[-.2cm]
\label{eq:E2}
\\[-.3cm]
\nonumber
 & =& \int_{\RR^{2d}}\left( \lambda^2|\rev{v_{\alpha,\EE}}(t)-v|^2 
+2\lambda  v \cdot (\rev{v_{\alpha,\EE}}(t)-v) + \sigma^2 \sum_{i=1}^d  D_{ii}(v)^2 \right ) f(v,t)f(v_*,t) \, dv\, dv_*\,,
\end{eqnarray}
}
\rev{and the variance time evolution}
\be
\begin{split}
\frac{d V(t)}{dt}
& = \frac{\lambda^2}{2}\int_{\RR^{2d}} \rev{|v_{\alpha,\EE}}(t)-v\rev{|}^2  f(v,t)f(v_*,t) \, dv\, dv_*\\
&+\lambda \int_{\RR^{2d}} (v-m(t))\rev{\cdot}(\rev{v_{\alpha,\EE}}(t)-v)  f(v,t)f(v_*,t) \, dv\, dv_*\\
&+ \frac{\sigma^2}{2} \sum_{i=1}^d \int_{\RR^{2d}}  D_{ii}\rev{(v)}^2 f(v,t)f(v_*,t) \, dv\, dv_*\,.
\label{eq:energy2}
\end{split}
\ee
Thanks to the identity
\begin{equation*}
\begin{split}
\int_{\RR^{d}}(v-m(t)) \rev{\cdot} (\rev{v_{\alpha,\EE}}(t)-v) f(v,t)\,dv & = 
\rev{ \int_{\RR^d} ( v \cdot \rev{v_{\alpha,\EE}}(t) - m(t) \cdot v_{\alpha,\EE}(t) - |v|^2 + v \cdot m(t) ) f(v,t)\,dv } \\
&= \int_{\RR^{d}} ( - |v|^2 + \rev{|m(t)|^2}) f(v,t)\,dv,
\end{split}
\end{equation*}
we note that the second term \rev{of \eqref{eq:energy2}} is equal to $-2\lambda V(t)$. 

\rev{We recall that, from \eqref{ValphaE}, $v_{\alpha,\EE}(t)$ is defined as
\[ v_{\alpha,\EE}(t) = \frac{\int_{\mathbb R^{d}}v\omega_\alpha^\EE(v)f(v,t)\,dv}{\int_{\mathbb R^{d}}\omega_\alpha^\EE(v)f(v,t)\,dv}  =  \int_{\mathbb R^{d}}v\, \frac{e^{-\alpha\EE(v)}}{\|\omega_\a^\EE\|_{L^1(f(\cdot,t))}}f(v,t)\,dv\]}

The remaining terms in \eqref{eq:energy2} can then be estimated by pointing out that
\be
\begin{split}
\int_{\RR^d}|v_{\alpha,\EE}\rev{(t)} - v|^2 f(v,t) dv &\rev{\leq \int_{\RR^{2d}} |v-w|^2 \frac{e^{-\a\EE(w)}}{\rev{\|\omega_\a^\EE\|_{L^1(f(\cdot,t))}}}f(v,t) f(w,t)dvdw}\\
&\leq \rev{2\frac{e^{-\a\underline\EE}}{\|\omega_\a^\EE\|_{L^1(f(\cdot,t))}}}\int_{\RR^d} |v-m(t)|^2 f(v,t) dv
\label{eq:valpha-v}
\end{split}
\ee
thanks to Jensen's inequality. 
Lastly, we obtain \rev{the desired upper bound}
\be
\begin{split}
\frac{dV(t)}{dt} &\leq \rev{\lambda^2  \frac{e^{-\a\underline\EE}}{\|\omega_\a^\EE\|_{L^1(f(\cdot,t))}}} \int_{\RR^d} |v-m(t)|^2 f(v,t) dv - \lambda \int_{\RR^d} |v-m(t)|^2 f(v,t) dv\\
&+ \rev{\sigma^2} \kappa  \rev{\frac{e^{-\a\underline\EE}}{\|\omega_\a^\EE\|_{L^1(f(\cdot,t))}}} \int_{\RR^d} |v-m(t)|^{\rev{2}}f(v,t) dv \\
&\leq -\left( 2\lambda-  \rev{2\frac{e^{-\a\underline\EE}}{\|\omega_\a^\EE\|_{L^1(f(\cdot,t))}} (\lambda^2 + \kappa \sigma^2)}\right) V(t)\,.
\end{split}
\label{eq:Vest2}
\ee
\end{proof}

\rev{
\begin{remark}
We note that, by applying $\|\omega_\a^\EE\|_{L^1(f(\cdot,t))} \geq e^{-\a \overline \EE}$ to \eqref{eq:dValpha} one gets an analogous condition, as in Corollary \ref{c:1} with $C_{\b, \EE}$ replaced by $C_{\a, \EE}:= e^{\a(\overline \EE - \underline \EE)}$, under which the solution $f$ concentrates around a point $\tilde v \in \RR^d$.
However, as we will see in Section \ref{S4}, taking into account the time evolution of $\|\omega_\a^\EE\|_{L^1(f(\cdot,t))}$ a weaker condition can be obtained, which avoids the limitations induced by large values of $\a$.
\end{remark}
}


\subsection{The mean-field scaling limit}
Let us consider, for \rev{the sake of notational simplicity}, the case with only the microscopic binary estimate. We introduce the following scaling 
\be
t \to \frac{t}{\varepsilon},\qquad \lambda \to \lambda\varepsilon,\qquad \sigma \to \sigma\sqrt{\varepsilon}.
\label{eq:scaling}
\ee
The scaling \eqref{eq:scaling}, allows to recover in the limit the contributions due both to alignment and random exploration by diffusion. Other scaling limits can be considered, which are diffusion dominated or alignment dominated. As we shall see, derivation of mean-field CBO models is possible only under this choice of scaling.

\revlp{To illustrate this, let us consider the decay of the variance which is given by \eqref{eq:dVbeta}. If we now rescale time as $t \to {t}/{\varepsilon}$ we get 
\be
\frac{dV(t)}{dt} \leq - \frac1{\varepsilon}\left(\frac{\lambda}{C_{\beta,\EE}}-{\lambda^2}-{\sigma^2}\kappa\right) V(t).
\label{eq:sv}
\ee
Letting now $\varepsilon\to 0$ in order to preserve the behavior of the variance and both alignment and diffusion dynamics we need to assume both $\lambda$ and $\sigma^2$ as $O(\varepsilon)$. This argument shows that the choice of the scaling \eqref{eq:scaling} is of paramount importance to get mean-field asymptotics which maintain memory of the microscopic interactions and concentration effects.
}

\revlp{In the remainder of this section, we shall present the formal derivation of the mean-field limit, starting from weak form of the Boltzmann equation \eqref{eq:Boltz} under the scaling \eqref{eq:scaling} which leads to the microscopic binary interactions 
\begin{equation}
\begin{split}
v' &= v + \varepsilon\lambda\gamma^\EE_\beta(v,v_*)(v_*-v)+\sqrt{\varepsilon}\sigma D(v,v_*)\xi_1 \\
v_*' &= v_* + \varepsilon\lambda\gamma^\EE_\beta(v_*,v)(v-v_*)+\sqrt{\varepsilon}\sigma D(v_*,v)\xi^*_1. 
\end{split}
\label{eq:binp2s}
\end{equation}
}
For small values of $\varepsilon>0$ we have $v' \approx v$ and  we can consider the multidimensional Taylor expansion
\[
\phi(v')=\phi(v)+(v'-v)\cdot \nabla_v \phi(v) + \sum_{|\eta|=2} (v'-v)^\eta \frac{\partial^\eta\phi(v)}{\eta!}+\sum_{|\eta|=3} (v'-v)^\eta \frac{\partial^\eta\phi(\rev{\hat v})}{\eta!},
\]
where we used the multi-index notation $|\eta|=\eta_1+\ldots+\eta_d$, $\eta!=\eta_1!\ldots \eta_d!$,
\[
\partial^\eta \phi(v)= \frac{\partial^{|\eta|}}{\partial^{\eta_1} v_1\ldots \partial^{\eta_d} v_d}\rev{\phi(v)},
\quad (v'-v)^\eta = (v_1'-v_1)^{\eta_1}\cdots (v'_d-v_d)^{\eta_d},
\]
and $\rev{\hat v} = \theta v + (1-\theta) v'$, for some $\theta \in (0,1)$. We refer to \cite{partos13} for an extensive discussion on this kind of asymptotic limits leading from a Boltzmann dynamic to the corresponding mean-field behavior. Here, we limit ourselves, to observe that form an algorithmic viewpoint this corresponds to increase the frequency of binary interactions by reducing the strength of each single interaction. 

Now \eqref{eq:Boltz}, under the scaling \eqref{eq:scaling}, can be written as
\begin{equation}
\begin{split}
\frac{\partial}{\partial t} \int_{\RR^d} f(v,t)\phi(v)\,dv &= \frac1{\varepsilon}\left\langle\int_{\RR^{2d}}\left(\phi(v')-\phi(v)\right)f(v,t)f(v_*,t)\,dv\,dv_*\right\rangle\\
&= \lambda \int_{\RR^{2d}}\gamma^\EE_\beta(v,v_*) \nabla_v \phi(v)\cdot (v_*-v)f(v,t)f(v_*,t)\,dv\,dv_*\\
& + \varepsilon \frac{\lambda^2}{2} \int_{\RR^{2d}}(\gamma^\EE_\beta(v,v_*))^2\sum_{|\eta|=2} (v_*-v)^\eta \frac{\partial^\eta\phi(v)}{\eta!}f(v,t)f(v_*,t)\,dv\,dv_* \\
& + \frac{\sigma^2}{2} \int_{\RR^{2d}}\sum_{i=1}^d D_{ii}^2(v,v_*)\frac{\partial^2 \phi(v)}{\partial v_i^2}f(v,t)f(v_*,t)\,dv\,dv_* \\
& + O(\sqrt{\varepsilon})
\end{split}
\label{eq:Boltz2}
\end{equation}
Under suitable boundedness assumptions on moments up to order three, we can formally pass to the limit $\varepsilon \to 0$ to get the weak form
\begin{equation}
\begin{split}
\frac{\partial}{\partial t} \int_{\RR^d} f(v,t)\phi(v)\,dv 
=& \, \lambda \int_{\RR^{2d}}\gamma^\EE_\beta(v,v_*) \nabla_v \phi(v)\cdot (v_*-v)f(v,t)f(v_*,t)\,dv\,dv_*\\
& + \frac{\sigma^2}{2} \int_{\RR^{2d}}\sum_{i=1}^d D_{ii}^2(v,v_*)\frac{\partial^2 \phi(v)}{\partial v_i^2}f(v,t)f(v_*,t)\,dv\,dv_*. 
\end{split}
\label{eq:Boltz3}
\end{equation}
This implies that $f$ satisfies the mean-field limit equation
\be
\begin{split}
\frac{\partial f(v,t)}{\partial t} + \lambda \nabla_v \cdot &\left(f(v,t)\int_{\RR^{d}}\gamma^\EE_\beta(v,v_*)(v_*-v)f(v_*,t)\,dv_* \right)\\ 
&= \frac{\sigma^2}{2}\sum_{i=1}^d \frac{\partial^2}{\partial v_i^2} \left(f(v,t)\int_{\RR^{d}} D_{ii}^2(v,v_*) f(v_*,t)\,dv_*\right).
\end{split}
\label{eq:mf}
\ee
The explicit expressions of the diffusion terms are given below for the isotropic case
\be
\int_{\RR^{d}} D_{ii}^2(v,v_*) f(v_*,t)\,dv_* = \sum_{j=1}^d \int_{\RR^{d}} \gamma^\EE_\beta(v,v_*)^2 (v_{*,j}-v_j)^2 f(v_*,t)\,dv_*
\ee
and the anisotropic one
\be
\int_{\RR^{d}} D_{ii}^2(v,v_*) f(v_*,t)\,dv_* = \int_{\RR^{d}} \gamma^\EE_\beta(v,v_*)^2 (v_{*,i}-v_i)^2 f(v_*,t)\,dv_*.
\ee 


In the general case, by analogous computations, under boundedness assumptions on moments, in the limit $\varepsilon\to 0$ we get
the weak form
\begin{equation}
\begin{split}
\frac{\partial}{\partial t} \int_{\RR^d} f(v,t)\phi(v)\,dv 
=\, & \lambda_1 \int_{\RR^{2d}}\gamma^\EE_\beta(v,v_*) \nabla_v \phi(v)\cdot (v_*-v)f(v,t)f(v_*,t)\,dv\,dv_*\\
& + \lambda_2 \int_{\RR^{d}}\nabla_v \phi(v)\cdot (\rev{v_{\alpha,\EE}(t)}-v)f(v,t)\,dv\\
& + \frac{\sigma_1^2}{2} \int_{\RR^{2d}}\sum_{i=1}^d D_{1,ii}^2(v,v_*)\frac{\partial^2 \phi(v)}{\partial v_i^2}f(v,t)f(v_*,t)\,dv\,dv_*\\
& + \frac{\sigma_2^2}{2} \int_{\RR^{2d}}\sum_{i=1}^d D_{2,ii}^2(v)\frac{\partial^2 \phi(v)}{\partial v_i^2} 
f(v,t)\,dv,
\end{split}
\label{eq:Boltz3b}
\end{equation}
which corresponds to the mean-field limit equation
\begin{eqnarray}
\nonumber
\frac{\partial f(v,t)}{\partial t} &+& \lambda_1 \nabla_v \cdot \left(f(v,t)\int_{\RR^{d}}\gamma^\EE_\beta(v,v_*)(v_*-v)f(v_*,t)\,dv_* \right) 
 + \lambda_2 \nabla_v \cdot \left(f(v,t)(\rev{v_{\alpha,\EE}(t)}-v) \right)\\[-.2cm]
\label{eq:mfb}
 \\[-.3cm]
 \nonumber
&=& \frac{\sigma_1^2}{2}\sum_{i=1}^d \frac{\partial^2}{\partial v_i^2} \left(f(v,t)\int_{\RR^{d}} D_{1,ii}^2(v,v_*) f(v_*,t)\,dv_*\right)
+\frac{\sigma_2^2}{2}\sum_{i=1}^d \frac{\partial^2}{\partial v_i^2} \left(f(v,t)D_{2,ii}^2(v)\right).
\end{eqnarray}
\rev{
\begin{remark}
\label{R33}
System \eqref{eq:mfb} generalizes the notion of CBO model to the case where a local interaction is taken into account. Additionally, let us remark that from the scaling \eqref{eq:scaling} in the mean field limit we have the analogous of Proposition \ref{p:locvar} and \ref{p:macrovar} where now the $\lambda^2$ terms disappear, making the corresponding concentration conditions less restrictive.    
\end{remark}
}



\section{Convergence to the global minimum}
\label{S4}

In this section, we will attempt to understand under which conditions we can assume $\displaystyle\lim_{t\rightarrow\infty} \EE\left(m(t)\right)$ to be a good approximation of $\underline \EE:= \min_{v\in\RR^d}\EE(v)$.

In order to do so, we will investigate the large-time behavior of the solution $f(v,t)$ to the \rev{Boltzmann} equation \rev{\eqref{eq:Boltz}}. Here, we will \rev{first} limit ourselves to the case where only the microscopic best estimate occurs during the interactions \rev{and then study the case where only the macroscopic best estimate occurs.} 

\subsection{\rev{The case with only the microscopic best estimate}}

\rev{In order to study} the fully microscopic dynamics, let us set $\lambda_2 = \sigma_2 =0$ and $\lambda = \lambda_1$, $\sigma=\sigma_1$. Throughout this section we assume $\EE$ to satisfy Assumption \ref{a:bound} and the following additional regularity assumptions. 
\begin{assumption} $\EE \in \mathcal{C}^2(\RR^d)$ and there exist $c_1,c_2 >0$ such that
\begin{enumerate}
\item $\displaystyle\sup_{v \in \RR^d} |\nabla \EE(v) | \leq c_1\;;$ 
\item $\displaystyle\sup_{v \in \RR^d} \rev{\|\nabla^2\EE(v)\|_2 }\leq c_2 \;\; \forall\,\, i=1,\ldots,d\, .$
\end{enumerate}
\label{a:reg}
\par
\end{assumption}
Under these assumptions on the objective function $\EE$, the following result holds.

\begin{theorem}\label{maintheo}
Let $f(v,t)$ satisfy the \rev{Boltzmann} equation \rev{\eqref{eq:Boltz}} with initial datum $f_0(v)$ \rev{and binary interaction described by \eqref{eq:binp2}.
Let also Assumptions \ref{a:bound} and \ref{a:reg} hold for $\EE$}. If the model parameters $\{\lambda,\sigma,\beta\}$ \rev{and $f_0(v)$} satisfy 
\begin{align}
\label{t:hp1}
&\mu:= \frac{\lambda}{C_{\beta,\EE}} \rev{- \lambda^2}  - \sigma^2 \kappa > 0 \\
&\nu := \frac{2(\rev{\sqrt{2}}\lambda c_1 + \rev{(\lambda^2+ \sigma^2 \kappa) c_2} )\beta e^{-\beta\underline \EE}}{\mu \|\omega_\beta^\EE\|_{L^1(f_0)}} \rev{ \max \{\sqrt{V(0)}, V(0)\}} < \frac{1}{2} \label{t:hp2}
\end{align}
then there exists $\tilde v \in \RR^d$ such that $m(t) \longrightarrow \tilde{v}$ as $t \rightarrow \infty$. Moreover, it holds the estimate
\be
\EE(\tilde v) \leq \underline \EE + r(\beta) + \frac{\log 2}{\beta}
\ee
where, \rev{if a minimizer $v^\star$ of $\EE$ belongs to $\text{supp}(f_0)$, then} $r(\beta):= -\frac{1}{\beta}\log \|\omega_\beta^\EE\|_{L^1(f_0)} - \underline \EE \longrightarrow 0$ as $\beta \rightarrow \infty$ thanks to the Laplace principle \eqref{Laplace}.
\end{theorem}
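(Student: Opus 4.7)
The plan is to adapt the CBO-type convergence strategy of \cite{carrillo2018analytical,carrillo2019consensus,fhps20-2} to the binary-interaction mean-field equation \eqref{eq:mf}. The argument will rest on three ingredients: (i) exponential decay of the variance, (ii) Cauchy behavior of $m(t)$, and (iii) a Grönwall-type lower bound on the weighted mass $H(t) := \|\omega_\beta^\EE\|_{L^1(f(\cdot,t))}$.

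First I would observe that at the mean-field level the $O(\lambda^2)$ correction appearing in Proposition \ref{p:locvar} disappears, since it arose from the $O(\varepsilon)$ Boltzmann expansion and vanishes in the limit $\varepsilon \to 0$. Repeating the Young--inequality computation and Lemma \ref{lemmaxi} directly on the weak mean-field equation \eqref{eq:Boltz3} then yields the sharper decay $V(t) \leq V(0) e^{-\mu t}$ with $\mu$ as in \eqref{t:hp1}. Next, testing \eqref{eq:mf} against $\phi(v)=v$, using $|\gamma_\beta^\EE| \leq 1$ and Cauchy--Schwarz with the identity $\int |v-v_*|^2 f(v)f(v_*)\,dv\,dv_* = 4V(t)$, one obtains $|\dot m(t)| \leq 2\lambda\sqrt{V(t)}$. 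Integrating $\sqrt{V(t)} \leq \sqrt{V(0)}\,e^{-\mu t/2}$ in time shows that $(m(t))_{t\geq 0}$ is Cauchy and converges to some limit $\tilde v \in \RR^d$, the candidate approximate minimizer.

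The heart of the proof is the estimate on $H(t)$. I would test \eqref{eq:mf} against $\phi(v) = \omega_\beta^\EE(v)$ and use $\nabla \omega_\beta^\EE = -\beta\omega_\beta^\EE\nabla\EE$ and $\partial_{ii}\omega_\beta^\EE = \omega_\beta^\EE(\beta^2(\partial_i\EE)^2 - \beta\,\partial_{ii}\EE)$. Assumption 4.1 bounds $|\nabla\EE|$ and $|\partial_{ii}\EE|$ uniformly, $\omega_\beta^\EE \leq e^{-\beta\underline{\EE}}$, and the kernel satisfies $\sum_i D_{ii}^2(v,v_*) \leq \kappa|v-v_*|^2$ in both the isotropic and the anisotropic case. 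The nonnegative $\beta^2(\partial_i\EE)^2$ contribution in the second derivative can be discarded since we are seeking a lower bound on $\dot H$. A careful application of Cauchy--Schwarz together with the variance identity then yields a differential inequality of the form
\be
\frac{dH(t)}{dt} \geq -2(\lambda c_1 + \sigma^2\kappa c_2)\,\beta\, e^{-\beta\underline{\EE}}\,\sqrt{V(t)}\,.
\ee
Integrating over $[0,t]$ and using $V(t) \leq V(0) e^{-\mu t}$ gives $H(t) \geq H(0)(1-\nu)$ for all $t\geq 0$ with $\nu$ as in \eqref{t:hp2}; hence $H(t) \geq H(0)/2$ as soon as $\nu < 1/2$.

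To conclude, $V(t)\to 0$ together with $m(t)\to \tilde v$ imply $\int|v-\tilde v|^2 f(v,t)\,dv \leq 4V(t) + 2|m(t)-\tilde v|^2 \to 0$, so that $f(\cdot,t)$ concentrates at $\tilde v$ and hence $H(t) \to e^{-\beta \EE(\tilde v)}$ by continuity and dominated convergence. Combined with the uniform lower bound $H(t)\geq H(0)/2$, this yields
\be
\EE(\tilde v) \leq -\frac{1}{\beta}\log H(0) + \frac{\log 2}{\beta} = \underline{\EE} + r(\beta) + \frac{\log 2}{\beta}\,,
\ee
as claimed. I expect the main technical obstacle to be the careful treatment of the diffusion contribution to $\dot H$: the second derivative of $\omega_\beta^\EE$ mixes the sign--indefinite $\beta^2(\partial_i\EE)^2$ term with the $-\beta\,\partial_{ii}\EE$ term, and moreover the diffusion kernel $D_{ii}^2$ is nonlocal in $(v,v_*)$ through $\gamma_\beta^\EE$, so extracting a single power of $\beta$ and a single factor $\sqrt{V(t)}$, as needed to match the admissibility condition \eqref{t:hp2}, requires balancing these contributions via the variance--dissipation identity rather than bounding them naively.
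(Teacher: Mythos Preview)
Your proposal is correct and follows essentially the same route as the paper: exponential variance decay (with the $\lambda^2$ term indeed dropping out at the mean-field level), integrability of $\dot m(t)$ to produce the limit $\tilde v$, and the lower bound on $H(t)=\|\omega_\beta^\EE\|_{L^1(f(\cdot,t))}$ obtained by testing \eqref{eq:mf} with $\phi=\omega_\beta^\EE$ and discarding the nonnegative $\beta^2(\partial_i\EE)^2$ contribution. The one point you glossed over is that the diffusion term naturally produces a factor $V(t)$, not $\sqrt{V(t)}$, in the lower bound for $\dot H$; the paper handles this by imposing the harmless normalization $V(0)\leq 1$ so that $V(t)\leq V(t)^{1/2}$, which is what allows both contributions to be merged into the single $\sqrt{V(t)}$ factor matching the form of $\nu$ in \eqref{t:hp2}.
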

\begin{proof}
\rev{Similar to what we did to derive the mean-field scaling limit, we consider the multidimensional Taylor expansion for $\omega_\beta^\EE$
\be
\left\langle \omega_\beta^\EE(v') - \omega_\beta^\EE(v) \right \rangle = \left \langle \nabla \omega_\beta^\EE(v) \cdot (v'-v) +\frac12 (v'-v) \cdot \nabla^2 \omega_\beta^\EE(\hat v)(v'-v) \right \rangle
\ee
where $\hat v = \theta v +(1-\theta)v'$ for some $\theta\in(0,1)$. Thanks to Assumption \ref{a:reg}, one can bound the above terms as 
\be\notag \left \langle \nabla \omega_\beta^\EE(v) \cdot (v'-v) \right \rangle = 
-\beta e^{-\beta \EE(v)} \lambda \nabla \EE(v) \cdot(v_{\beta,\EE}(v,v_*)-v) \geq - \beta e^{-\beta \underline \EE}\lambda c_1|v_{\beta,\EE}(v,v_*) - v|\,.
\ee
By computing the Hessian of $\omega_\beta^\EE(v)$
\[  \nabla^2 \omega_\beta^\EE =  \beta^2 e^{-\beta \EE} \nabla \EE \otimes\nabla  \EE - \beta e^{-\beta \EE} \nabla^2 \EE\,,
\]
we obtain
\begin{align*}
\frac12\left \langle (v'-v)\cdot \nabla^2 \omega_\beta^\EE(\hat v) (v'-v) \right \rangle
& = \left \langle \frac12 \beta^2 e^{-\beta \EE(\hat v)} |\nabla \EE(\hat v) \cdot (v'-v)|^2 - \frac\beta 2 e^{-\beta \EE(\hat v)} (v'-v) \cdot \nabla^2\EE(\hat v) (v'-v) \right \rangle\\
& \geq - \frac\beta 2 e^{-\beta\underline \EE} \| \nabla^2 \EE(\hat v)\|_2  \left \langle|v'-v|^2 \right \rangle \\
&\geq - \frac \beta 2  e^{-\beta\underline \EE}  (\lambda^2 + \sigma^2\kappa) c_2| v_{\beta, \EE}(v,v_*) - v |^2
\end{align*}
where in the last inequality we used Assumption \ref{a:reg} and the fact that 
\[\left \langle |v'-v|^2 \right \rangle \leq 
\lambda^2 |v_{\beta, \EE}(v,v_*) - v|^2 + \sigma^2 \left \langle |D(v,v_*) \xi_1|^2 \right \rangle \leq (\lambda^2 + \sigma^2 \kappa) |v_{\beta, \EE}(v,v_*) - v|^2\,,
\]
by definition of $D(v,v_*)$ and $\xi_1$.}

\rev{We introduce
\be
M_\beta(t) := \int_{\RR^d} \omega_{\beta}^\EE(v) f(v, t) \,dv = \| \omega_\beta^\EE\|_{L^1(f(\cdot,t))}
\ee
and apply the weak formulation \eqref{eq:Boltz} to $\phi(v)= \omega_{\beta}^\EE(v)$ to obtain
\be
\begin{split}
\frac{d M_\beta(t)}{dt} =& \left \langle \int_{\RR^d} \left (\omega_\beta^\EE(v') - \omega_\beta^\EE(v) \right) f(v,t) f(v_*,t) \,dv\, dv_* \right \rangle\\
\geq & - \beta e^{-\beta \underline \EE} \lambda c_1 \int_{\RR^{2d}} |v_{\beta,\EE}(v,v_*) - v|\,f(v,t) f(v_*,t)\, dv\, dv_*\\
& -  \frac \beta 2 e^{-\beta\underline \EE} (\lambda^2 + \sigma^2\kappa) c_2   \int_{\RR^{2d}}  |v_{\beta,\EE}(v,v_*) - v|^2\,f(v,t) f(v_*,t)\, dv\, dv_*\,.
\end{split}
\ee
We recall that
\[
\int_{\RR^{2d}} |v_{\beta,\EE}(v,v_*) - v|^2\,f(v,t) f(v_*,t)\, dv\, dv_* \leq 
\int_{\RR^{2d}} \gamma_{\beta}^\EE(v,v_*)|v_* - v|^2\,f(v,t) f(v_*,t) \,dv\, dv_* = 2 V(t)\,
\]
form which also follows, by Jensen's inequality,
\[ \int_{\RR^{2d}} |v_{\beta,\EE}(v,v_*) - v|\,f(v,t) f(v_*,t)\, dv\, dv_* \leq \sqrt{2V(t)}\,.
\]
Finally, we obtain
\be
\begin{split}
\frac{d M_\beta(t)}{dt} &\geq 
- \beta e^{-\beta \underline \EE}\lambda c_1\sqrt{2V(t)} -  \beta e^{-\beta\underline \EE}  (\lambda^2 + \sigma^2\kappa) c_2 V(t)\, \\
&\geq  - \beta e^{-\beta \underline \EE}  \left(\sqrt{2}\lambda c_1 + (\lambda^2 + \sigma^2 \kappa)c_2 \right) \max\{\sqrt{V(t)}, V(t)\}\,.
\end{split}
\ee
}

\rev{
Now, by definition of $\mu$ it holds $d V(t)/dt \leq - \mu V(t)$ thanks to Proposition \ref{p:locvar}. As we did in the proof of Corollary \ref{c:1}, we apply Grönwall's inequality to obtain an exponential decay of the variance from which follows
\[\rev{ \max \{\sqrt{V(t)}, V(t)\}  \leq  \max \{\sqrt{V(0)}, V(0)\} e^{-\frac12 \mu t}\, \quad \text{for all}\;\; t>0\,.} 
\]}
This leads to a lower bound for \rev{$M_\beta(t)$} in terms of \rev{$M_\beta(0)$}:
\be
\begin{split}
\rev{M_\beta(t)}\geq& \rev{M_\beta(0)} - \beta e^{-\beta\underline \EE} (\rev{\sqrt{2}}\lambda c_1 + \rev{(\lambda^2 + \sigma^2 \kappa)} c_2)  \rev{\max \{\sqrt{V(0)}, V(0)\}} \int_0^t e^{-\frac{1}{2}\mu s} ds \\
\geq&\rev{M_\beta(0)} - \frac{2(\rev{\sqrt{2}}\lambda c_1 + \rev{(\lambda^2 + \sigma^2 \kappa)} c_2)\beta e^{-\beta\underline \EE}}{\mu}  \rev{ \max \{\sqrt{V(0)}, V(0)\}} \\
=& \rev{M_\beta(0)} (1-\nu)\,.
\end{split}
\ee
By definition of $\nu$ and condition \eqref{t:hp2}, it holds
\be
\begin{split}
\rev{M_\beta(t) >}  \frac{1}{2}\rev{M_\beta(0)}\,.
\label{eq:ineq}
\end{split}
\ee
Let us now consider the limit of the above inequality as $t\rightarrow \infty$. 
Since $m(t) \rightarrow \tilde v$ and $V(t) \rightarrow 0$, \rev{it holds}
\be
\rev{M_\beta(t) = \int \omega_\beta^{\EE} (v) f(v,t)\, dv\; \longrightarrow\; \omega_\beta^\EE(\tilde v) = e^{-\beta \EE(\tilde v)}\quad \text{as} \quad t \to \infty\,.
}\ee 
\rev{The above limit is a consequence of Chebyshev’s inequality, we refer to the proof of \cite[Lemma 4.2]{ carrillo2018analytical} for more details.
Considering the limit of inequality \eqref{eq:ineq} as $t \to \infty$, we have} 
\be
e^{-\beta \EE(\tilde v) >} \frac{1}{2}\rev{M_\beta(0)} \, .
\ee
Finally, we take the logarithm of both sides of the above inequality to obtain
\be
\begin{split}
\EE(\tilde v) &\rev{<} - \frac{1}{\beta}\log\rev{M_\beta(0)} + \frac{\log 2}{\beta} \rev{\,=\,}  \underline \EE + r(\beta) + \frac{\log 2}{\beta}\,,
\end{split}
\ee
where $r(\beta) :=\rev{\,- \frac{1}{\beta}\log M_\beta(0)  - \underline \EE =} - \frac{1}{\beta}\log \|\omega_\beta^\EE\|_{L^1(f_0)}  - \underline \EE $.
\end{proof}

\subsection{\rev{The case with only the macroscopic best estimate}}

\rev{
We now consider the case where only the macroscopic dynamics occurs and the interaction is determined by \eqref{eq:binp3}.
\begin{theorem}\label{t:2}
Let $f(v,t)$ satisfy the \rev{Boltzmann} equation \rev{\eqref{eq:Boltz}} with initial datum $f_0(v)$ \rev{and binary interaction described by \eqref{eq:binp3}.
Let also Assumptions \ref{a:bound} and \ref{a:reg} hold for $\EE$}. If the model parameters $\{\lambda,\sigma,\alpha\}$ \rev{and $f_0(v)$} satisfy 
\begin{align}
\label{t2:hp1}
&\mu:= 2 \lambda - 4 \frac{e^{-\alpha\underline \EE}}{\|\omega_\a^\EE\|_{L^1(f_0)} }  (\lambda^2 + \kappa\sigma^2) > 0 \\
&\nu := \frac{4(2\lambda + \rev{\lambda^2 +} \sigma^2 \kappa)c_2 \alpha e^{-2\a\underline \EE}}{\mu \|\omega_\a^\EE\|_{L^1(f_0)} ^2} V(0) < \frac{3}{4} \label{t2:hp2}
\end{align}
then there exists $\tilde v \in \RR^d$ such that $m(t) \longrightarrow \tilde{v}$ as $t \rightarrow \infty$. Moreover, it holds the estimate
\be
\EE(\tilde v) \leq \underline \EE + r(\a) + \frac{\log 2}{\a}
\ee
where, \rev{if a minimizer $v^\star$ of $\EE$ belongs to $\text{supp}(f_0)$, then} $r(\a):= -\frac{1}{\a}\log \|\omega_\a^\EE\|_{L^1(f_0)} - \underline \EE \longrightarrow 0$ as $\a \rightarrow \infty$ thanks to the Laplace principle \eqref{Laplace}.
\end{theorem}
}

\begin{proof}
\rev{
Similar to the proof of Theorem \ref{maintheo}, we consider the Taylor expansion of $\omega_\alpha^\EE$ which reads as
\be
\left \langle \omega_\alpha^\EE(v') - \omega_\alpha^\EE(v) \right \rangle =\lambda
 \nabla\omega_\alpha^\EE(v) \cdot (v_{\alpha,\EE}(t) - v) + \frac12 \left \langle (v'-v) \cdot \nabla^2\omega_\alpha^\EE(\hat v)(v'-v)         \right \rangle 
 \ee
for some $\hat v \in \RR^d$. As before, by using Assumption \eqref{a:reg} and the definition of $\nabla^2 \omega_{\alpha}^\EE$, the second term can be bounded as
\be
\begin{split}\notag
\frac12 \left \langle (v'-v) \cdot \nabla^2\omega_\alpha^\EE(\hat v)(v'-v)         \right \rangle 
&\geq - \frac\alpha2 e^{- \alpha \EE(\hat v)} (v' - v)\cdot \nabla^2\EE(\hat v) (v' - v) \\
&\geq - \frac \alpha 2 e^{-\alpha \underline \EE} (\lambda^2 + \sigma^2 \kappa) c_2 | v_{\alpha, \EE}(t) - v |^2\,.
\end{split}
\ee
For the first term of the expansion, it holds
\be
\begin{split}\notag
\int_{\RR^{2d}} \lambda \nabla\omega_\alpha^\EE(v) \cdot (v_{\alpha,\EE}&(t) - v) \,f(v,t)f(v_*,t)\,dv\,dv_* \\
& = - \alpha\lambda \int_{\RR^{2d}} e^{-\a\EE(v)} \nabla \EE(v) \cdot (v_{\alpha,\EE}(t) - v) \,f(v,t)f(v_*,t)\,dv\,dv_* \\
& = - \alpha\lambda \int_{\RR^{2d}} e^{-\a\EE(v)} \left (\nabla \EE(v) - \nabla \EE(v_{\alpha,\EE}(t)) \right)\cdot (v_{\alpha,\EE}(t) - v) \,f(v,t)f(v_*,t)\,dv\,dv_*\\
&
\geq - \alpha e^{-\alpha \underline \EE}\lambda c_2 \int_{\RR^{2d}} |v_{\alpha,\EE}(t) - v|^2 \,f(v,t)f(v_*,t)\,dv\,dv_*\,,
\end{split}
\ee
where we used that 
\[\int_{\RR^d} e^{-\a \EE(v)} \nabla\EE(v_{\alpha,\EE}(t)) \cdot (v_{\alpha,\EE}(t) - v)f(v,t)f(v_*,t)\,dv\,dv_* = 0\,.\]
As before, we denote $M_\alpha(t):= \| \omega_\alpha^\EE(t)\|_{L^1(f(\cdot,t))}$. By the weak formulation \eqref{eq:Boltz} it then follows
\be
\begin{split}
\frac{d}{dt}M_\alpha^2(t) &= 2 M_\a(t) \frac{d}{dt}M_\a(t) =2 M_\a(t) \left \langle\int_{\RR^{2d}} \omega_\alpha^\EE(v') - \omega_\alpha^\EE(v) \,f(v,t)f(v_*,t)\,dv\,dv_* \right \rangle \\
& \geq - 4\a c_2 \left(2\lambda + \lambda^2 + \sigma^2\kappa\right) e^{-2\a\underline\EE}  V(t)
\end{split}
\ee
where we used \eqref{eq:valpha-v} to bound the expectation of $|v_{\a, \EE}(t) - v|^2$.
}

\rev{
We now define the time
\be T := \sup \left\{t\,:\, M_\alpha(s) >\frac 12 M_\alpha(0), \; \forall\; s \in[0,t]   \right\} \ee  
and assume that $T< \infty$. By assumption \eqref{t2:hp1} on $\mu$, for all $t \in [0,T]$
\be\notag
2 \lambda - 2 \frac{e^{-\alpha\underline \EE}}{M_\alpha(t)}  (\lambda^2 + \kappa\sigma^2) \geq 2 \lambda - 4 \frac{e^{-\alpha\underline \EE}}{M_\alpha(0)}  (\lambda^2 + \kappa\sigma^2) = \mu >0\,,
\ee
which leads to \[\frac{dV(t)}{dt} \leq - \mu V(t)\] thanks to Proposition \ref{p:macrovar}. Due to Grönwall's inequality one has $V(t) \leq V(0) \exp(-\mu t)$ for all $t \in [0,T]$. 
By assumption \eqref{t2:hp2},
\be\begin{split}
M_\a^2(t) \geq M^2_\a(0) - 4 (2\lambda + \lambda^2 + \sigma^2\kappa) c_2 \alpha
e^{-2\a \underline \EE} V(0) \int_0^t e^{-\mu s} ds \\
> M^2_\a(0) - \frac{4 (2\lambda + \lambda^2 + \sigma^2\kappa)c_2 \alpha
e^{-2\a \underline \EE}}{\mu} V(0) \geq \frac14 M_\a^2(0)
\end{split}\ee 
which implies that for all $t \in [0,T]$, 
\be M_\a(t) > \frac12 M_\a(0)\,.
\label{eq:Mbound}
\ee This means that for some $\delta>0$, $M_\a(t) \geq \frac12 M_\a(0)$ for all $t\in [T, T+\delta)$  which contradicts the definition of $T$. Consequently, $T=\infty$ and hence \eqref{eq:Mbound} holds for all $t>0$. As a consequence, we obtain the exponential decay of the variance 
\be
V(t) \leq V(0) e^{-\mu t} \quad \text{for all} \; t>0\,.
\ee}

\rev{
As we showed in the proof of Corollary \ref{c:1}, there exists a $\tilde v \in \RR^d$ such that $m(t) \to \tilde v$ as $t\to \infty$ with exponential rate, from which follows $M_\a(t) \to e^{-\a \EE(\tilde v)}$. By taking the limit as $t \to \infty$ of \eqref{eq:Mbound}, we obtain
\be e^{-\a\EE(\tilde v)} > \frac12 M_\a(0)
\ee
and we conclude that
\be
\EE(\tilde v) < - \frac1\a \log M_\a(0) + \frac{\log 2}\a = \underline \EE + r(\a) + \frac{\log 2}\a\,,
\ee
where $r(\a) := - \frac{1}{\a}\log M_\a(0)  - \underline \EE = - \frac{1}{\a}\log \|\omega_\a^\EE\|_{L^1(f_0)}  - \underline \EE $.}
\end{proof}

\rev{Finally, it is possible to prove a general convergence result to the global minimum for the case where both the local and global best alignments occur in the particles interaction. In the following, for simplicity we will set $\beta = \alpha$.} 
\rev{
\begin{theorem}\label{t:3}
Let $f(v,t)$ satisfy the \rev{Boltzmann} equation \rev{\eqref{eq:Boltz}} with initial datum $f_0(v)$ \rev{and binary interaction described by \eqref{eq:binp}.
Let also Assumptions \ref{a:bound} and \ref{a:reg} hold for $\EE$}. If the model parameters $\{\lambda_1,\lambda_2,\sigma_1, \sigma_2,\alpha\}$ \rev{and $f_0(v)$} satisfy 
\begin{align}
\label{t3:hp1}
&\mu:= 2 \lambda_2 - 4 \frac{e^{-\alpha\underline \EE}}{\|\omega_\a^\EE\|_{L^1(f_0)} }  (\lambda_2^2 + \kappa\sigma_2^2) - (\lambda^2_1 +\sigma^2_1 \kappa) > 0 \\
&\nu := \frac{8\left(\sqrt{2}\lambda_1 c_1 + (\lambda_1^2 + \sigma_1^2\kappa) c_2+  (2 \lambda_2 + \lambda_2^2 + \sigma_2^2 \kappa)c_2 \right)\alpha e^{-2\a\underline \EE}}{\mu \|\omega_\a^\EE\|_{L^1(f_0)} ^2} \max\{\sqrt{V(0)}, V(0)\} < \frac{3}{4} \label{t3:hp2}
\end{align}
then there exists $\tilde v \in \RR^d$ such that $m(t) \longrightarrow \tilde{v}$ as $t \rightarrow \infty$. Moreover, it holds the estimate
\be
\EE(\tilde v) \leq \underline \EE + r(\a) + \frac{\log 2}{\a}
\ee
where, \rev{if a minimizer $v^\star$ of $\EE$ belongs to $\text{supp}(f_0)$, then} $r(\a):= -\frac{1}{\a}\log \|\omega_\a^\EE\|_{L^1(f_0)} - \underline \EE \longrightarrow 0$ as $\a \rightarrow \infty$ thanks to the Laplace principle \eqref{Laplace}.
\end{theorem}
}
\rev{The proof closely follows the proofs of Theorems \ref{maintheo} and \ref{t:2} and will be omitted for brevity. It is interesting to remark, however, that condition \eqref{t3:hp1} is far less restrictive than the corresponding condition where only the local best is used \eqref{t:hp1}. This suggest to use the local best in practical applications only in combination with the global best.} 

\rev{Before concluding our theoretical analysis, a few remarks are in order.}

\begin{remark}
\begin{itemize}
\rev{\item  The assumptions in Theorems \ref{maintheo}, \ref{t:2} and \ref{t:3} depend strongly on $\beta$ and $\alpha$, which have to be considered as fixed parameters. Therefore, the limits $t\to \infty$ and $\beta$, or $\a \to \infty$ are not interchangeable. Furthermore, for a given $\beta$, or $\alpha$, a choice of $\lambda_1, \lambda_2$ and $\sigma_1, \sigma_2$ satisfying the  assumptions is always possible, at the cost of taking $V(0)$ sufficiently small.
\item Under the mean field scaling \eqref{eq:scaling}, one can directly derive the equivalent of Theorem \ref{t:3} for the mean-field limit dynamics \eqref{eq:mfb}. For small values of the scaling parameter $\varepsilon$, the quadratic terms in $\lambda_1, \lambda_2$ will vanish and, in the case of global best only, we recover the same convergence result of CBO methods (see for instance \cite[Theorem 3.1] {carrillo2019consensus}).
\item Finally, in the case where the diffusion process in binary interactions is anisotropic, namely \eqref{eq:aniso} holds and therefore $\kappa=1$, convergence to the global minimum is guaranteed with parameter constraints independent of the problem dimensionality. For this reason, in all numerical examples of the next section only anisotropic noise has been considered.}  
\end{itemize}
\end{remark}


\section{Numerical examples and applications}
\label{S5}

This section is devoted to discuss \rev{the implementation of the proposed methods and to test their performance with the aid of several numerical experiments}. The first \rev{experiment}, \rev{in Section \ref{S52}}, consists of checking the fitness of the macroscopic best estimate in \eqref{ValphaE} employing in the evolution of the dynamic both terms \eqref{WbetaE} and \eqref{ValphaE}, in comparison to the sole presence of one of the two. The second experiment\rev{, presented in \cref{subsec:compSGD},} is devoted to show how even simple 1--dimensional problems may pose serious issues to classical descent methods, whilst the proposed procedure has an high success rate. Finally, the last section presents an application to a classical machine learning problem, showing that KBO methods have the potential to outperform classical approaches. \rev{It should be noted that, in numerical experiments, to facilitate comparison with the literature, we will denote by $f(x)$ the function to be minimized and by $x$ the variable in the search space, instead of $\EE(v)$ and $v$ as in the description of the KBO method.}

\subsection{Implementation}
\label{S51}
The numerical implementation of KBO relies on two different algorithms inspired by Nanbu's and Bird's direct simulation Monte Carlo methods in rarefied gas dynamics \cite{Bird, Nanbu, PRMC}. The former considers at each time step the evolution of distinct pairs of particles, while the latter allows for multiple interactions between pairs of particles in a time step. The methods are summarized in Algorithms \ref{algo:nanbu} and \ref{algo:bird}, the interested reader can find additional details on similar algorithms used in particle swarming in \cite{AlPa, partos13}. \rev{Mathematically, let us remark that in the limit of a large number of particles Nambu's method converges to a discrete-time formulation of \eqref{eq:Boltz}, while Bird's method converges to the continuous-time formulation \eqref{eq:Boltz}.} 

In the algorithms reported, the parameters $\delta_{\mbox{\tiny stall}}$ and $n_{\mbox{\tiny stall}}$ check if consensus has been reached in the last $n_{\mbox{\tiny stall}}$ iterations within a tolerance $\delta_{\mbox{\tiny stall}}$: in such case, the evolution is stopped without reaching the total number of iterations. The initial particles are drawn from a given distribution, typically uniform in the search space unless one has additional informations on the locations of the global minimum. Note that in Bird's algorithm interactions take place without any time counter compared to Nanbu's method. As a consequence the total number of interactions as well as the parameter $n_{\mbox{\tiny stall}}$ have to be adjusted accordingly to the overall number of particles.

\medskip
\begin{algorithm}[H]
\begin{algorithmic}
\footnotesize
\STATE{Input parameters: $N_p$, $N_t$, $\varepsilon>0$, $\sigma_1, \sigma_2$, $\lambda_1, \lambda_2$, $n_{\mbox{\tiny stall}}$ and $\delta_{\mbox{\tiny stall}}$} 
\STATE{Initialise $N_p$ particles: $\{v_i^{(0)}\}_{i=1,\ldots,N_p}$} 
\STATE{$t\gets 0$, $n\gets 0$}
\STATE{Compute $v_{\alpha,\EE}^{(0)}$}
\WHILE{$t<N_t$ and $n<n_{\mbox{\tiny stall}}$}
\FOR{$i=1,\ldots,N_P$}
\STATE{Select uniformly another individual $v_j^{(t)}$, among the others except $v_i^{(t)}$} 
\STATE{Compute $v_{\beta,\EE}\lp v_i^{(t)},v_j^{(t)}\rp$ }
\STATE{$d_{\beta,i} \gets v_{\beta,\EE}\lp v_i^{(t)},v_j^{(t)}\rp-v_i^{(t)}$}
\STATE{$d_{\alpha, i} \gets v_{\alpha,\EE}^{(t)}-v_i^{(t)}$}
\STATE{Generate $\xi_1, \xi_2 \sim \mc{N}(0,1)$}
\STATE{$v_i^{(t+1)}\gets v_i^{(t)} + \varepsilon\lambda_1 d_{\beta,i}+ {\varepsilon}\lambda_2 d_{\alpha, i} + \sqrt{\varepsilon}\sigma_1\Diag(d_{\beta, i})\xi_1 +  \sqrt{\varepsilon}\sigma_2\Diag(d_{\alpha, i })\xi_2$}
\ENDFOR
\STATE{Compute $v_{\alpha,\EE}^{(t+1)}$}
\IF{$\|v_{\alpha,\EE}^{(t+1)}- v_{\alpha,\EE}^{(t)}\|_2<\delta_{\mbox{\tiny stall}}$}
\STATE{$n\gets n+1$}
\ELSE
\STATE{$n\gets 0$}
\ENDIF
\STATE{$t\gets t+1$}
\ENDWHILE
\end{algorithmic}
\caption{Nanbu KBO}\label{algo:nanbu}
\end{algorithm}

\begin{algorithm}[h]
\begin{algorithmic}
\footnotesize
\STATE{Input parameters: $N_p$, $N_t$, $\varepsilon>0$, $\sigma_1, \sigma_2$, $\lambda_1, \lambda_2$, $n_{\mbox{\tiny stall}}$ and $\delta_{\mbox{\tiny stall}}$} 
\STATE{Initialise $N_p$ particles: $\{v_i^{(0)}\}_{i=1,\ldots,N_p}$} 
\STATE{$s\gets 0$, $n\gets 0$, $N_s \gets N_t N_p/2$, $n_{\mbox{\tiny stall}} \gets n_{\mbox{\tiny stall}}N_p/2$}
\STATE{Compute $v_{\alpha,\EE}^{(0)}$}
\WHILE{$s<N_s$ and $n<n_{\mbox{\tiny stall}}$}
\STATE{Select a random pair $(i,j)$ uniformly among the $\begin{pmatrix}N_p\\2\end{pmatrix}$ possible ones .}
\STATE{Compute $ v_{\beta,\EE}\lp v_i, v_j\rp$ }
\STATE{$d_{\beta,i} \gets  v_{\beta,\EE}\lp v_i, v_j\rp- v_i$, $d_{\beta, j} \gets  v_{\beta,\EE}\lp v_i, v_j\rp- v_j$}
\STATE{$d_{\alpha, i} \gets  v_{\alpha,\EE}^{(s)}- v_i$, $d_{\alpha, j} \gets  v_{\alpha,\EE}^{(s)}- v_j$}
\STATE{Generate $\xi_1, \xi_2, \xi_1^*,\xi_2^* \sim \mc{N}(0,1)$}
\STATE{$ v_i\gets  v_i + \varepsilon\lambda_1 d_{\beta,i}+ \varepsilon\lambda_2 d_{\alpha, i} + \sqrt{\varepsilon}\sigma_1\Diag(d_{\beta, i})\xi_1 +  \sqrt{\varepsilon}\sigma_2\Diag(d_{\alpha, i })\xi_2$}
\STATE{$ v_j\gets  v_j  + \varepsilon\lambda_1 d_{\beta,j}+ \varepsilon\lambda_2 d_{\alpha, j} + \sqrt{\varepsilon}\sigma_1\Diag(d_{\beta,j})\xi_1^* +  \sqrt{\varepsilon}\sigma_2\Diag(d_{\alpha, j })\xi^*_2$}
\STATE{Update $v_{\alpha,\EE}^{(s+1)}$}
\IF{$\|v_{\alpha,\EE}^{(s+1)}- v_{\alpha,\EE}^{(s)}\|_2<\delta_{\mbox{\tiny stall}}$} 
\STATE{$n\gets n+1$}
\ELSE
\STATE{$n\gets 0$}
\ENDIF
\ENDWHILE
\end{algorithmic}
\caption{Bird KBO}\label{algo:bird}
\end{algorithm}


\begin{figure}[htb]
\newcommand{\factor}{0.4}
\begin{center}
\subfigure[Nanbu KBO, $v_{\beta,\mathcal{E}}$; succ. rate\label{sfig:s_nanbuloc1}]{\includegraphics[width=\factor\textwidth]{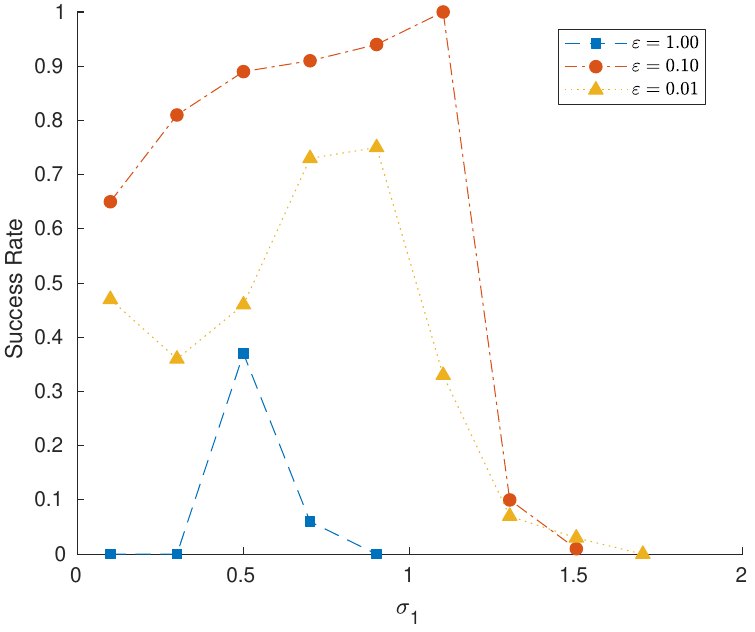}}\hspace{0.1\textwidth}
\subfigure[Nanbu KBO, $v_{\beta,\mathcal{E}}$; iters\label{sfig:i_nanbuloc1}]{\includegraphics[width=\factor\textwidth]{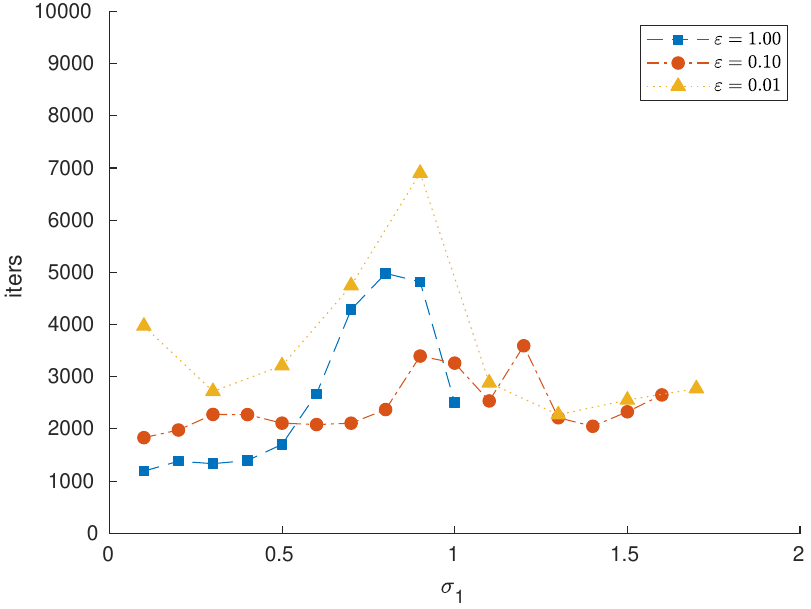}}\\\subfigure[Nanbu KBO, $v_{\alpha,\mathcal{E}}$; succ. rate\label{sfig:s_nanbuloc2}]{\includegraphics[width=\factor\textwidth]{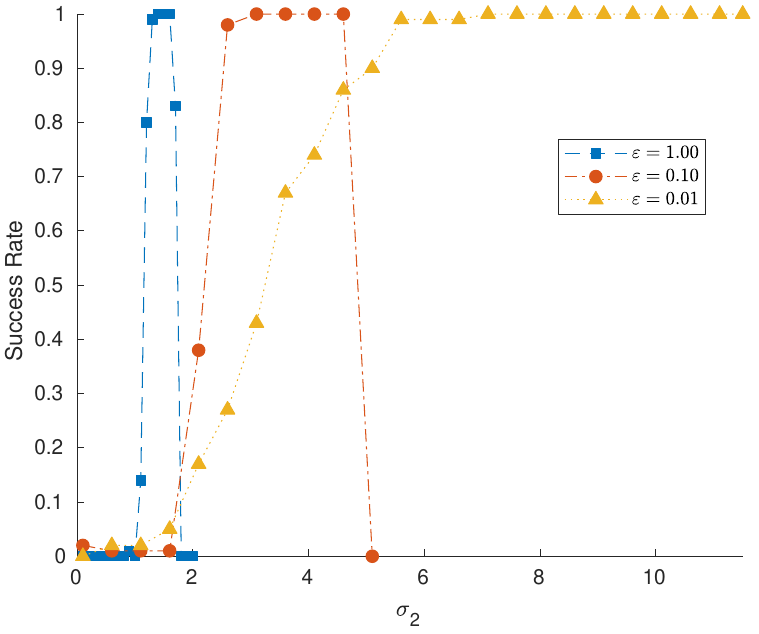}}\hspace{0.1\textwidth}
\subfigure[Nanbu KBO, $v_{\alpha,\mathcal{E}}$; iters\label{sfig:i_nanbuglo2}]{\includegraphics[width=\factor\textwidth]{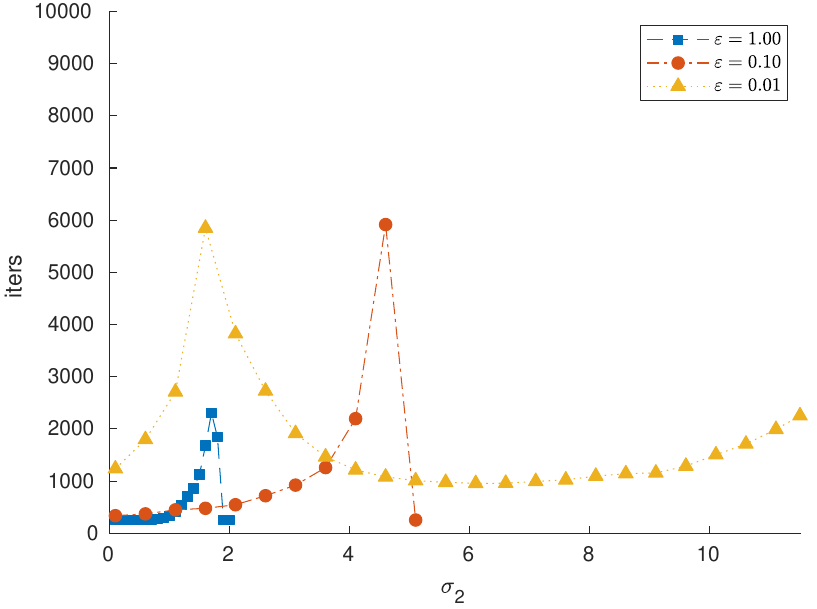}}
\end{center}
\caption{Minimization of Rastrigin function for KBO based on Nanbu's algorithm. From left to right: success rate and average iterations number. Top row refers to the local best only, while the bottom one refers to the global best only.} 
\label{fig:locVSglo1}
\end{figure}

\begin{figure}[htb]
\newcommand{\factor}{0.4}
\begin{center}
\subfigure[Bird KBO, $v_{\beta,\mathcal{E}}$; succ. rate\label{sfig:s_birdloc1}]{\includegraphics[width=\factor\textwidth]{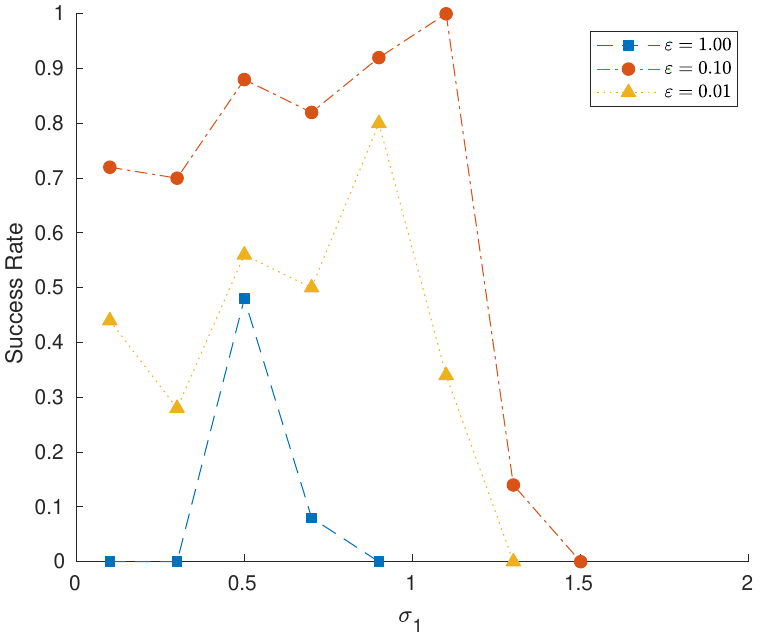}}\hspace{0.1\textwidth}
\subfigure[Bird KBO, $v_{\beta,\mathcal{E}}$; iters\label{sfig:i_birdloc1}]{\includegraphics[width=\factor\textwidth]{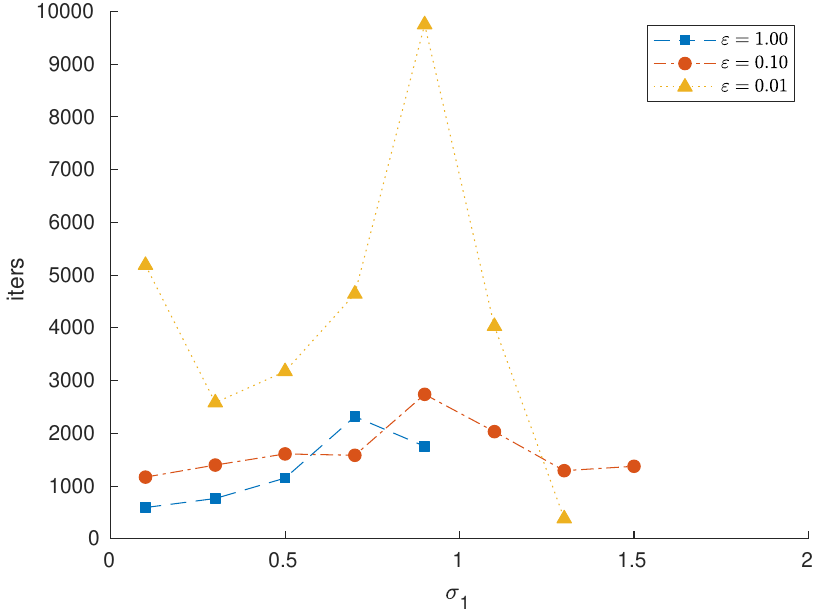}}\\
\subfigure[Bird KBO, $v_{\alpha,\mathcal{E}}$; succ. rate\label{sfig:s_nanbuglo2}]{\includegraphics[width=\factor\textwidth]{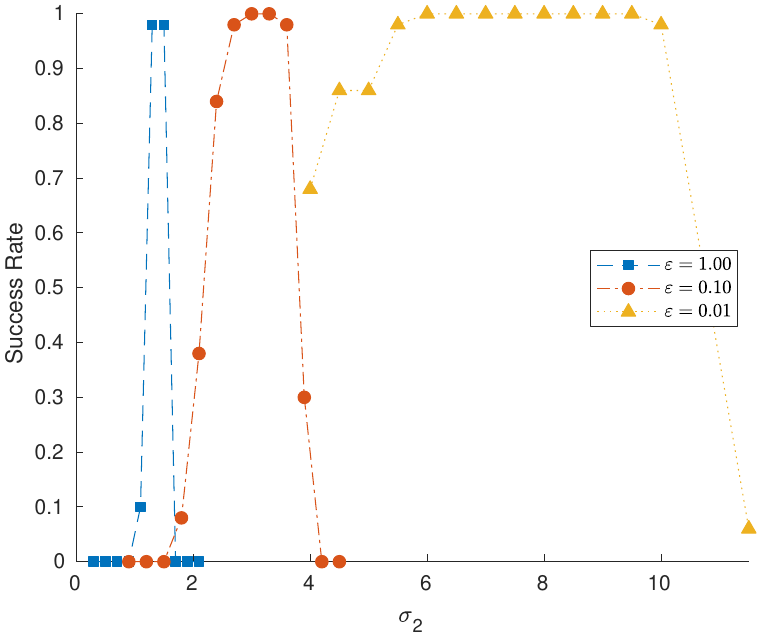}}\hspace{0.1\textwidth}
\subfigure[Bird KBO, $v_{\alpha,\mathcal{E}}$; iters\label{sfig:s_birdglo2}]{\includegraphics[width=\factor\textwidth]{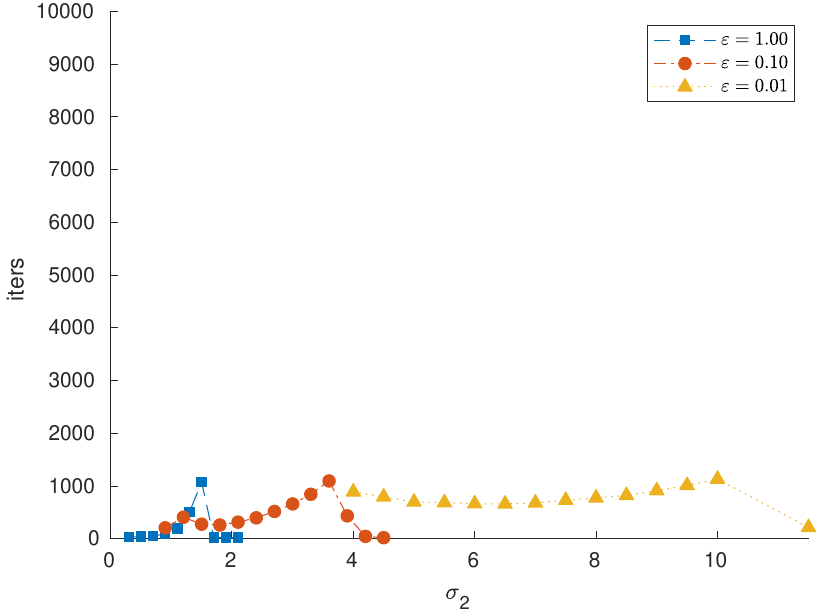}}
\end{center}
\caption{Minimization of Rastrigin function for KBO based on Bird's algorithm. From left to right: success rate  and average iterations number. Top row refers to the local best only, while the bottom one refers to the global best only.} 
\label{fig:locVSglo2}
\end{figure}

\subsection{Validation of the algorithms}
\label{S52}
The validation of the KBO algorithms is pursued initially on a classical benchmark function for global optimization, the Rastrigin function \cite{JamilY13} in dimension $d=20$\rev{, with the global minimum $f(x^\star)=0$, at $x^\star=0$} (see Appendix A). As shown in \cite{carrillo2019consensus, pinnau2017consensus, gp20, fhps20-2}, compared to other benchmark functions the Rastrigin function in high dimension has proven to be quite challenging for CBO-type methods if one is interested in the computation of the precise value $x^\star$ in which the function reaches its global minimum. In fact, the Rastrigin function contains multiple similar minima located in different positions and the minimizer can get easily trapped in one local minimum without being able to compute the global optimum. This test is used to analyze the performances of the two different algorithmic implementations of the method and the effects of the parameters related to the alignment and the exploration processes based on the local best and the global best respectively. 

%

The computational parameters are fixed as $N=200$, $N_t=10000$, $n_{\mbox{\tiny stall}}=1000$, $\delta_{\mbox{\tiny stall}}=10^{-4}$ \rev{and the particles are initially distributed following an uniform distribution in the hypercube $[-3.12, 3.12]^d$, $d=20$}.
Figures \ref{fig:locVSglo1} and \ref{fig:locVSglo2} show the performance of KBO algorithms, considering only  the local best \eqref{ValphaE} or the global best \eqref{WbetaE}. In both figures the first row refers to the case in which only the microscopic estimate has been used, i.e. $\lambda_2=\sigma_2=0$, while $\lambda_1=1$ and $\sigma_1$ ranges in $(0,2]$, while the second row refers to the usage of the sole macroscopic estimation, i.e. $\lambda_1=\sigma_1\rev{=0}$, $\lambda_2=1$ and $\sigma_2\in(0,11]$. Two measures are used for the validation: the first one is the success rate, while the second is the number of iterations. In agreement with \cite{carrillo2019consensus, pinnau2017consensus}, a simulation is considered successful if and only if 
\begin{equation}
\label{eq:SRdef}
\|x^*_\alpha-x^\star\|_\infty <0.25
\end{equation}
where $x^*_\alpha$ is the macroscopic best estimate (provided by \eqref{ValphaE}), while $x^\star$ is the actual minimizer of the Rastrigin function. Note that, in the case where only the local best has been used we still use the global best as an estimate of the global minimizer computed by the algorithm.
The algorithms have been tested for three different choices for $\varepsilon= 1, 0.1$ and $0.01$. Each setting has been  tested for 100 simulations. The local and global minimizers have been evaluated using $\alpha=\beta=5\times 10^6$. For the numerical implementation, we refer to the algorithm introduced in \cite{fhps20-1} which permits to use arbitrary large values of $\alpha$ and $\beta$.

The results for the local best only, in the first row of Figures \ref{fig:locVSglo1} and \ref{fig:locVSglo2} , suggest that there are no great differences in terms of success rate between the two algorithms, even if the choice for $\varepsilon=0.1$ seems to be the best compromise. On the other hand, for $\varepsilon=1$ and $\varepsilon=0.1$ Bird's algorithm needs a slightly less number of iteration for reaching convergence. The second row is devoted to present the results regarding the use of the global best only. In general, decreasing the value for $\varepsilon$ enlarges the interval in which the parameter $\sigma_2$ can be chosen, but at the same time this interval is shifted to the right, meaning that the algorithm needs more noise in order to explore the search domain and identify the global minimum. \rev{It is also clear from Figures \ref{fig:locVSglo1} and \ref{fig:locVSglo2} that the convergence basin with only the local best is significantly smaller than that with only the global best. This is in agreement with the theoretical results of Section 4.}

\begin{figure}[tb]
\begin{center}
\newcommand{\factor}{0.4}
\subfigure[Nanbu KBO, opt $\sigma_2$; succ rate\label{sfig:botha}]{\includegraphics[width=\factor\textwidth]{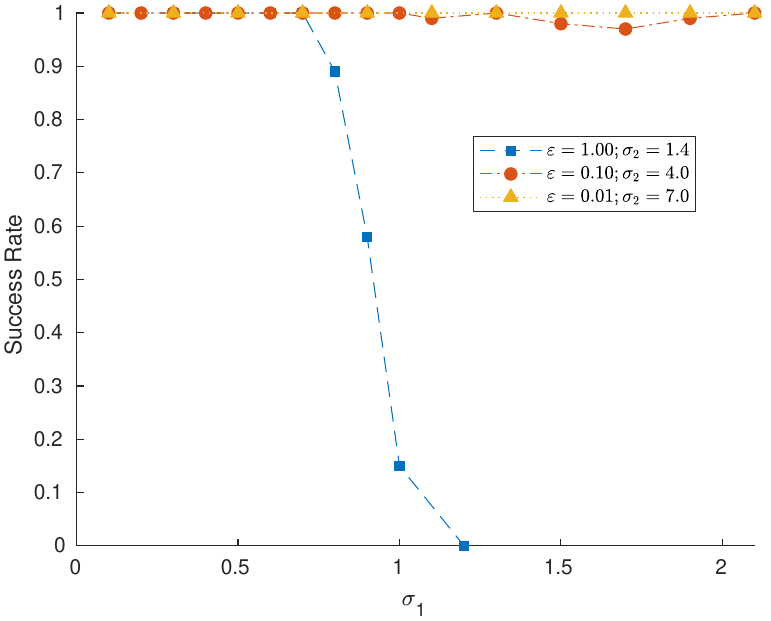}}\hspace{0.1\textwidth}
\subfigure[Nanbu KBO, opt $\sigma_2$; iters]{\includegraphics[width=\factor\textwidth]{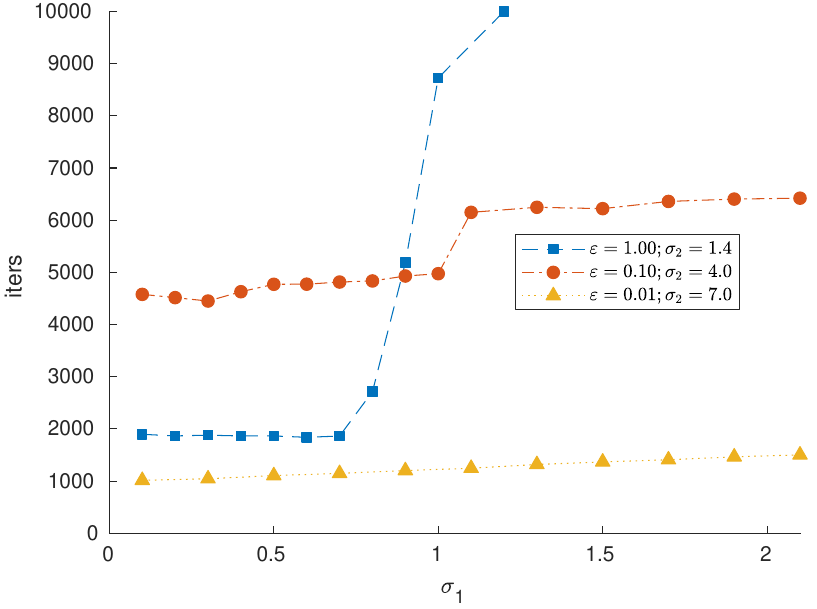}}\\
\subfigure[Nanbu KBO, opt $\sigma_1$; succ rate]{\includegraphics[width=\factor\textwidth]{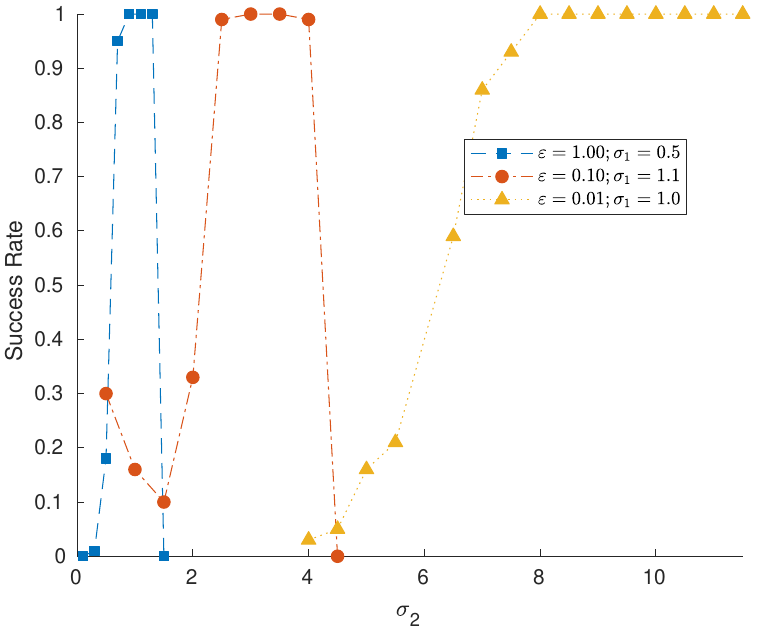}}\hspace{0.1\textwidth}
\subfigure[Nanbu KBO, opt $\sigma_2$; iters]{\includegraphics[width=\factor\textwidth]{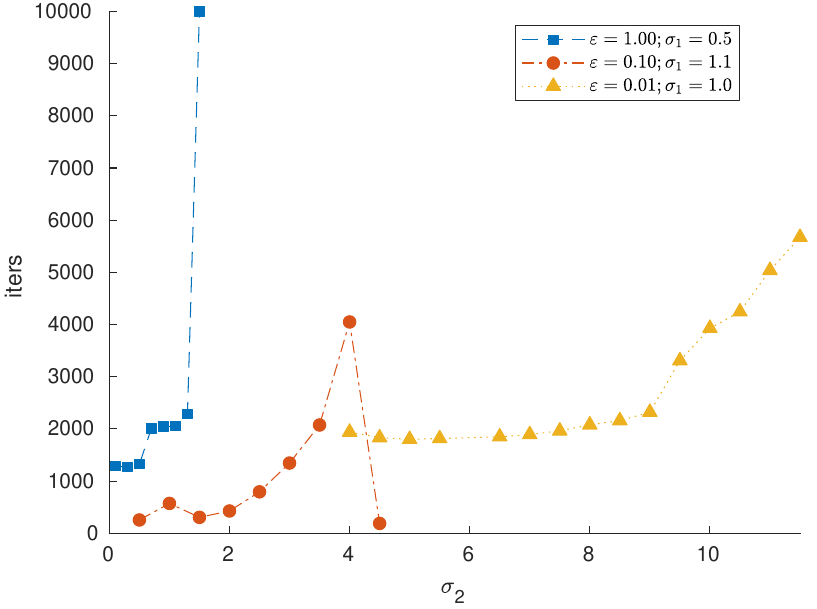}}
\end{center}
\caption{Minimization of Rastrigin function for KBO based on Nanbu's algorithm. From left to right: success rate and average iterations number using both local and global best. Top row refers to the optimal value for the global best, while the bottom one refers to the optimal value for the local best. }
\label{fig:both1}
\end{figure}

\begin{figure}[h]
\begin{center}
\newcommand{\factor}{0.4}
\subfigure[Bird KBO, opt $\sigma_2$; succ rate]{\includegraphics[width=\factor\textwidth]{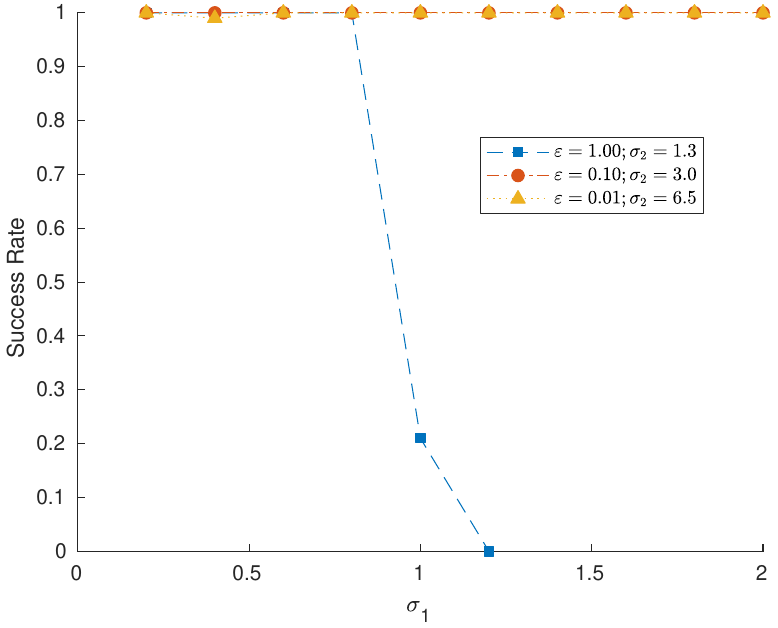}}\hspace{0.1\textwidth}
\subfigure[Bird KBO, opt $\sigma_2$; iters]{\includegraphics[width=\factor\textwidth]{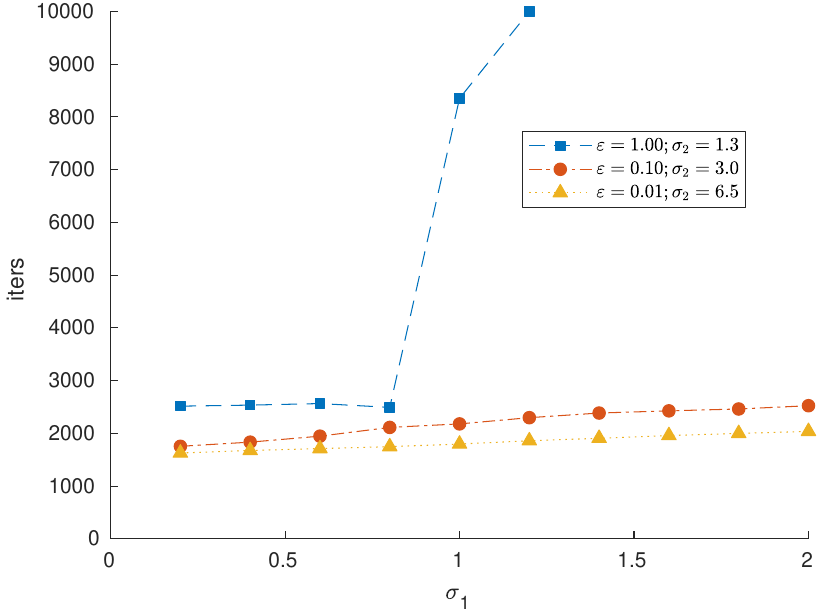}}\\
\subfigure[Bird KBO, opt $\sigma_1$; succ rate]{\includegraphics[width=\factor\textwidth]{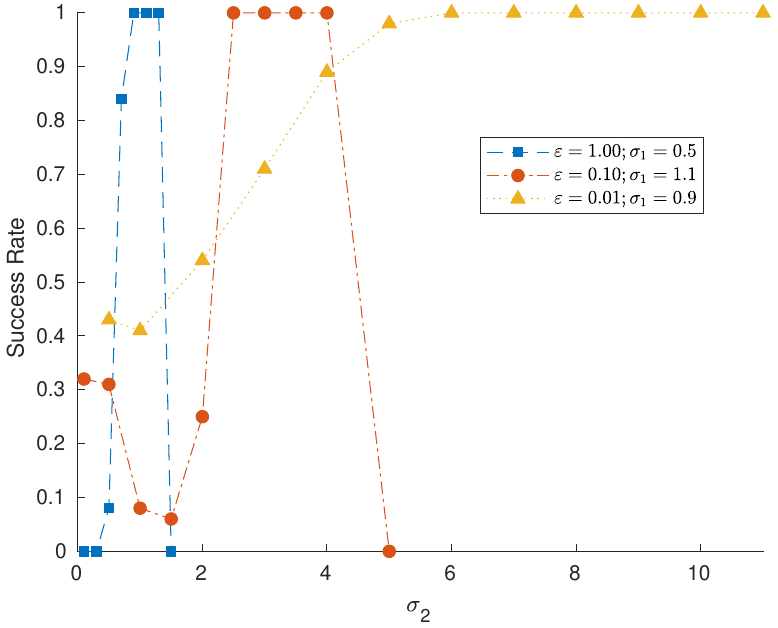}}\hspace{0.1\textwidth}
\subfigure[Bird KBO, opt $\sigma_1$; iters]{\includegraphics[width=\factor\textwidth]{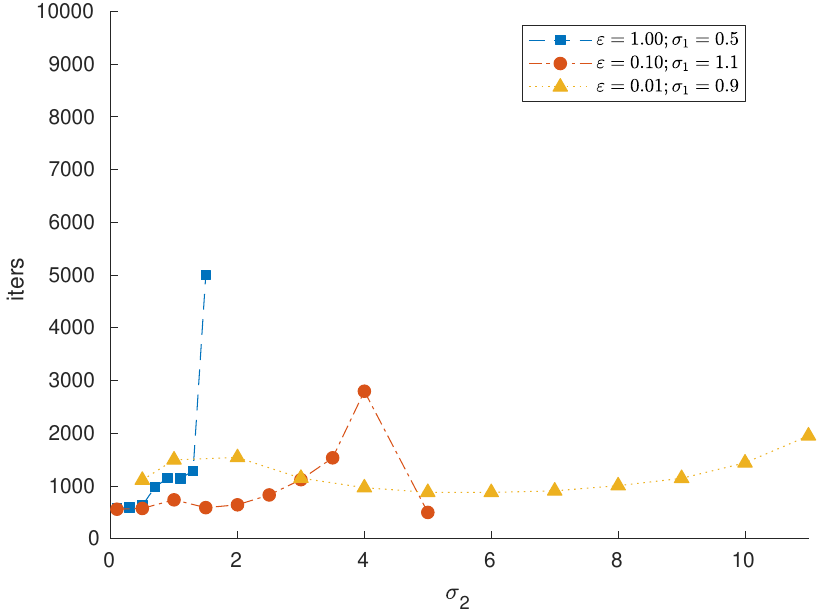}}
\end{center}
\caption{Minimization of Rastrigin function for KBO based on Bird's algorithm. From left to right: success rate and average iterations number using both local and global best. Top row refers to the optimal value for the global best, while the bottom one refers to the optimal value for the local best. }
\label{fig:both2}
\end{figure}

Note that the convergence region for Nanbu's algorithm is slightly wider and that, as in the previous case, the Bird algorithm needs a lower number of iteration to reach convergence. 

\cref{fig:both1,fig:both2} refer to the case in which both microscopic and macroscopic estimates are used in the procedure. In the first row, $\sigma_2$ has been chosen as the optimal value that provided the best success rate in the previous experiment, in the second row the same strategy is applied to $\sigma_1$. The depicted plot show that the performance drastically improves for certain options (check in particular \cref{sfig:botha}) and the required iteration number is decreasing too. \rev{This result is also in agreement with the theoretical analysis at the end of Section \ref{S4} that indicates an increase of the basin of convergence of the method based on the microscopic best when used in combination with the macroscopic best.}
As a final comment we can mention that Bird's algorithm, thanks to the multiple interactions, produced less fluctuations in the numerical solution compared to Nanbu's algorithm. This is well known in rarefied gas dynamics where the algorithms have their origins \cite{PRMC}. In our specific case, this translates is slightly narrower convergence regions and slightly faster convergence rates.

\subsection{Comparison with Stochastic Gradient Descent}
\label{subsec:compSGD}
Next, we considered a test case to compare the proposed KBO algorithms with the classical Stochastic Gradient Descent (SGD). While the main interest in a gradient-free method is in situations where gradient computation is either not possible or is particularly expensive, the purpose of this simple numerical test, originally introduced \rev{in} \cite{carrillo2019consensus} is to illustrate the potential advantages of a consensus-based method even in circumstances where the gradient is available but get easily trapped into local minima without allowing the identification of the global minimum.   

Following \cite{carrillo2019consensus}, we want to minimize the function
\begin{equation}
\label{eq:mintest}
L(x) = \frac1n\sum_{i=1}^n f(x,\xi_i) 
\end{equation}
where
$$
f(x,\xi_i)  = \exp\lp\sin(2x^2)\rp + \frac{1}{10}\lp x-\xi_i-\frac{\pi}{2}\rp^2, \quad\xi_i\sim\mathcal{N}(0,0.01)
$$

The plot of \eqref{eq:mintest} together with its minimum $f(x^\star)$ at $x^\star=1.5353$ (with $n=10000$) is shown in \cref{fig:mintest1}. 

\begin{figure}[tb]
\begin{center}
\includegraphics[width=0.45\textwidth]{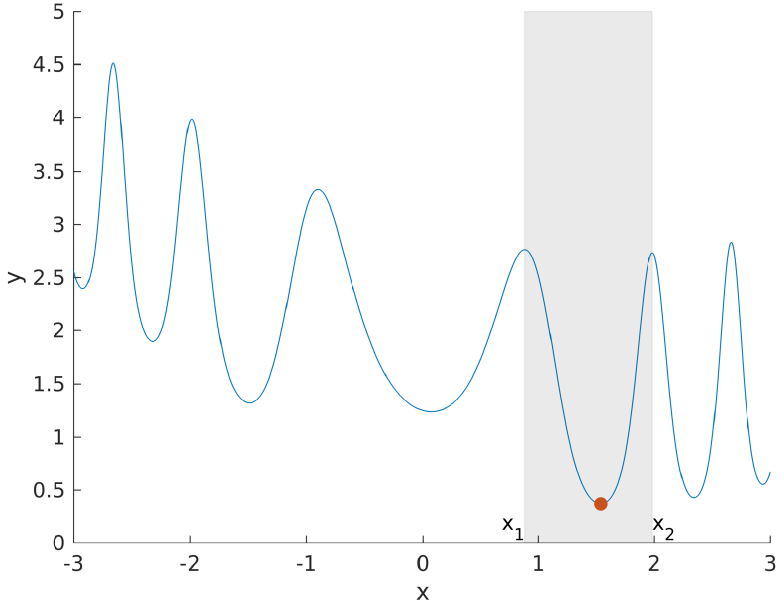}
\end{center}
\caption{Plot of \eqref{eq:mintest}. The orange dot refers to the minimum of the function, the shaded area to the basin of attraction for SGD, and $x_1$ and $x_2$ to the position of the peaks of the basin.}\label{fig:mintest1}
\end{figure}

The SGD procedure is shown in Algorithm \ref{al:SGD1}: this algorithm implements the idea of minibatches, which consists of dividing the set $\{\xi_i\}_{i=1,\ldots,n}$ (the equivalent of a training set in Machine Learning problems) in smaller  $n/m$ subsets where $m$ is the size of each subset,  and then use the descent direction given by the average of these $m$ gradients computed at the current iterate. Exploring the whole set $\{\xi_i\}_{i=1,\ldots,n}$ is called an \emph{epoch} and one can decide to iterate the procedure for several epochs. The parameter $\gamma$ chosen in Algorithm \ref{al:SGD1} is the stepsize, called \emph{learning rate} in Machine Learning framework.

\begin{algorithm}[tb]
\begin{algorithmic}
\footnotesize
\STATE{Choose the learning rate $\gamma$, the Batch Size $m$, the number of Epochs $E$ and the tolerance $\epsilon$.} 
\STATE{Set $e=0$, $k=0$; generate $x^0\sim\mathcal{U}(-3,3)$. Set the number of iterations per epoch $I = \frac{n}{m}$.}
\WHILE{$e<E$ and $|\nabla L(x_k)|>\epsilon$}
\STATE{$i\gets 1$}
\WHILE{$i\leq I$ and $|\nabla L(x_k)|>\epsilon$}
\STATE{\vskip .2cm $\displaystyle x^{k+1} = x^k - \frac{\gamma}{m}\sum_{\ell\in b_k}\nabla f(x^k,\xi_\ell)$}
\STATE{where $b_k$ is a random index set drawn from $\{1,\ldots,n\}$ of size $m$.} 
\STATE{$k\gets k+1$}
\ENDWHILE
\ENDWHILE
\end{algorithmic}
\caption{SGD for minimizing \eqref{eq:mintest}}\label{al:SGD1}
\end{algorithm}

We minimize the function \rev{given} in \eqref{eq:mintest} with $n=10000$ by using both SGD and the proposed KBO algorithm: the former is set with $\gamma=0.1, m=100$, number of epochs equal to one and the procedure is stop\rev{ped} when $|\nabla f(x^k)|<\epsilon$, with $\epsilon=0.01$, while the setting for KBO can be found in \cref{tab:resSGD}, additional \rev{parameters} are $\delta_{\mbox{\tiny stall}}=10^{-4}$. For SGD the starting point is uniformly chosen in $[-3,3]$, the initial 20 particles are chosen in the same interval for KBO. We run 1000 simulations for SGD and 50 simulations for KBO: this is due to the equivalence of 20 runs of SGD to one of KBO. Indeed, the former case is equivalent to consider 20 different particles and then the minimization of the function is pursued independently on each particle. A simulation is considered successful for SGD if and only if the final iterate $x_\alpha^*$ satisfies $|x_\alpha^*-x^\star|<0.25$; for each simulation of KBO we count how many particles (in percentage) lie in the open ball $\mathcal{B}_{0.25}(x^\star)$, i.e. how many particles reached a consensus around the actual solution: \cref{tab:resSGD} collects the average of this consensus among the simulations.

\begin{table}[htb]
\begin{center}
\small
\begin{tabular}{l|l|c|c|c|c}
\small
Method & $\varepsilon$ & $\sigma_1$ & $\sigma_2$ & $n_{stall}$ & Success Rate\\
\midrule
SGD &* &*&*&*&18.00\% \\
\midrule
\multicolumn{6}{c}{Nanbu algorithm}\\
\midrule
KBO  & 1 & 0.1 & 0.5 & 50& 98.50\%  \\
KBO  & 0.1 & 1 & 1 & 50& 100.00\%  \\
KBO & 0.01 & 1 & 5 & 50& 98.15\%  \\
\midrule
\multicolumn{6}{c}{Bird algorithm}\\
\midrule
KBO  & 1 	& 0.5 & 0.5 & 50	& 98.50\%  \\
KBO  & 0.1 	& 1.0 & 1.3	& 50	& 100.00\%  \\
KBO  & 0.01	& 1.0 & 6.5	& 50	& 98.70\%  \\
\end{tabular}
\end{center}
\caption{Performances of SGD and KBO. In KBO algorithms we fixed the number of particles $N_p=20$ and the maximum iterations number $N_t=100$.}\label{tab:resSGD}
\end{table}

As shown, for this test case KBO algorithms outperforms the SGD method: even for a small number of particles ($N_p=20$), the minimum of the function is well recovered. The success rate of SGD is not surprisingly low: indeed, being a descent method without momentum, it hugely suffers from the presence of many local minima and from the initial position. The success rate of 18\% is very close to the probability of randomly choosing the initial iterate in the interval containing the actual minimum, shaded in gray in \cref{fig:mintest1}: $|x_2-x_1|/6=0.1833$. Enlarging or reducing the interval in which the initial point is chosen increases or decreases accordingly the success rate of SGD, while KBO does not seem to suffer from this problem. In conclusion, we observe how the implementation via Nanbu's method leads to a higher success rate and how in general Bird's method requires larger $\sigma_1$ and $\sigma_2$ exploration parameters. The latter aspect is in agreement with the lower statistical fluctuation of Bird's method and has been already observed in the previous test case, an aspect that is advantageous in the simulation of physical particles in the context of rarefied gas dynamics, but can prove counterproductive in the case of minimum search problems. For this reason, in the following, we will limit the presentation of subsequent numerical tests to the use of Nanbu's algorithm.

\subsection{Results on high dimensional benchmark functions}

This section is devoted to test the performance of the KBO approach on classical benchmark functions in a high dimensional framework ($d=50$). The related optimization problems have been solved by using a common set of parameters for KBO algorithm
\begin{equation} 
\lambda_1=\lambda_2=1,\quad \sigma_1=.1,\quad \sigma_2=6,\quad \epsilon=0.01,\quad n_{stall}=500,\quad \delta_{stall} = 10^{-4}
\label{eq:params}
\end{equation} 
and the maximum number of iteration is fixed to 10000. The numerical implementation of KBO approach relies on Algorithm \ref{algo:nanbu}. \cref{tab:bench} presents the results obtained on the functions listed in Appendix A. 

\cref{tab:bench} presents the success rate defined as in \eqref{eq:SRdef}
$$
\rev{\|x_\alpha^*-x^\star\|_\infty\leq \delta},
$$
\rev{where $\delta$ controls the severity of the criterion. We chose two different values, namely $0.25$ and $0.1$.} \rev{We computed also the average number of iteration for achieving convergence}. These results are obtained via 100 runs of each instance of the optimization problems. Two further performance measures are reported, the former being the expected error in Euclidean norm, defined as
$
\mathbb{E}\left[ | x^*_\alpha-x^\star|\right]
$
where $x^\star$ is the solution and $x^*_\alpha$ is the global estimate given by KBO procedure achieved for a successful run. The other measurement is the function value obtained at $x^*_\alpha$.

\begin{table}[htbp]
\begin{center}
\begin{tabular}{ll|c|c||ll|c|c}
Function& & $\delta=0.25$ & $\delta=0.01$&Function&& $\delta=0.25$ & $\delta=0.01$ \\
\toprule
\cmidrule{3-4}
\multirow{5}{*}{Salomon}			& SR 	 	& 100\% 	& 100\%			 		&	\multirow{5}{*}{Rastrigin}			& SR 	 	& 75\%		& \rev{84\%} 		\\
							 		& Iters  	& 6306		& \rev{10000} 	  		&										& Iters  	& 3893 		& \rev{2320}		\\
							 		& Error  	& 9.64e-02  & \rev{4.92e-02}		&										& Error  	& 6.91e-01 	& \rev{2.23e-05}	\\
							 		& Fval   	& 0.96		& \rev{0.49}	 		& 										& Fval   	& 0.25 		& \rev{8.95e-7}	 	\\ 
							 		& $N_a$   	& 133		& \rev{215}		 		&										& $N_a$   	& 182 		& \rev{804}			\\
\midrule
\multirow{5}{*}{Griewank}			& SR 	 	& 100\%		&  100\%			 	& \multirow{5}{*}{Schwefel 2.22}		& SR 		& 100\%		& 100\%				\\
							 		& Iters  	& 2722		&  \rev{1696} 	  		&							 			& Iters  	& 2165		& \rev{1631}		\\
							 		& Error  	& 9.22e-03	&  \rev{7.29e-03}	 	&							 			& Error  	& 1.27e-03	& \rev{1.49e-06} 	\\
							 		& Fval   	& 2.49e-2	&  \rev{1.04e-2}		&							 			& Fval   	& 0.27 & \rev{6.9e-4}	 		\\
							 		& $N_a$   	& 258		&  \rev{985}			&							 			& $N_a$   	& 335	& \rev{1017}			\\

\midrule
							 			
\multirow{5}{*}{StyLank}			& SR 	 	& 77\%		&  \rev{100\%} 			&	\multirow{5}{*}{Schwefel 2.23}		& SR 	 	& 100\%		& 100\%			 	\\
									& Iters  	& 5923		&  \rev{2062} 	  		&										& Iters  	& 10000		& 10000		 	  	\\
							 		& Error  	& 4.56e-03	&  \rev{4.70e-05} 		&										& Error  	& 4.53e-02	& \rev{4.69e-02}	\\
							 		& Fval   	& -1958.29	&  -1958.29	 			&									 	& Fval   	& 1e-5		& \rev{3.74e-8}		\\ 
							 		& $N_a$   	& 132		&  \rev{874}		 	&										& $N_a$   	& 75		& \rev{215}			\\
\midrule
\multirow{5}{*}{Neg. Exp.}			& SR 	 	& 100\%		&  100\%			 	& 	\multirow{5}{*}{Sphere}				& SR 	 	& 100\%		& 100\%		 		\\
							 		& Iters  	& 2517		&  \rev{1325}			& 										& Iters  	& 2368 		& \rev{1529}		\\
							 		& Error  	& 1.11e-03	&  \rev{1.40e-03} 		&										& Error  	& 1.02e-03	& \rev{1.88e-04}	\\
							 		& Fval   	& -1			&  -1		 		& 										& Fval   	& 1.00e-5 	& \rev{9.35e-7}	 	\\ 
							 		& $N_a$   	& 271			&  \rev{1129} 		& 										& $N_a$   	& 291 		& \rev{1051}		\\
\midrule
\multirow{5}{*}{Sum of Square}		& SR 	 	& 100\%		&  100\%			 	& 	\multirow{5}{*}{Ackley}				& SR 	 	& 100\%		& 100\%			 	\\
							 		& Iters  	& 2788		&  \rev{1719} 	  		& 										& Iters  	& 2701		& \rev{1674} 	  	\\
							 		& Error  	& 1.15e-03	&  \rev{2.96e-05} 		&										& Error  	& 1.69e-03	& \rev{3.87e-06}	\\
							 		& Fval   	& 2.93e-3	&  \rev{1.02e-6}	 	& 										& Fval   	& 3.32e-2	& \rev{7.00e-5}	 	\\ 
							 		& $N_a$   	& 252		&  \rev{966}		 	& 										& $N_a$   	& 259		& \rev{994}			\\
\bottomrule
\end{tabular}
\end{center}
\caption{Performance of KBO on benchmark functions in dimension $d=50$. All tests were run with the same parameters setting \eqref{eq:params}, and the initial position of the particles are chosen uniformly. Each instance was run for 100 times starting with $N_p=2000$ particles. The table reports the success rate (SR), the average number of iteration (Iters), the mean square error (Error) and the the average functions values (Fval) achieved on successful runs, and the \rev{arithmetic} average number of particles ($N_a$) used along the simulation.}
\label{tab:bench}
\end{table}

We employed here a strategy to dynamically reduce the number of particles used in the procedure. Indeed, as observed in \cite{fhps20-2}, a constant number of particles is not optimal: while the dynamic evolves, the variance of the system diminishes due to consensus. We may then reduce the number of particles, according to this variance decreasing, using the following strategy: compute the variance $S_t$ of the system at time $t$
$$
S_t = \frac{1}{N_t}\sum_{i=1}^{N_t}\rev{|}v_i^{(t)} - \bar{v}\rev{|}^2, \quad \bar{v}=\frac{1}{N_t}\sum_{i=1}^{N_t}v_i^{(t)}
$$ 
where $N_t$ is the number of particles at time $t$. As the consensus increases, the variance decreases: $S_{t+1}\leq S_t$, then the number of particles can be decreased following the ratio $S_t/S_{t+1}\leq 1$, using the formula
\begin{equation}
\label{eq:npartdec}
N_{t+1} = \left\llbracket N_t\lp1+\mu \lp\frac{\hat{S}_{t+1}-S_t}{S_t}\rp\rp \right\rrbracket
\end{equation}
with $\mu\in[0,1]$, $\llbracket x\rrbracket$ denoting the integer part of $x$ and 
$$
\hat{S}_{t+1} = \frac{1}{N_t}\sum_{i=1}^{N_t}\rev{|}v_i^{(t+1)} - \hat{v}|^2, \quad \hat{v}=\frac{1}{N_t}\sum_{i=1}^{N_t}v_i^{(t+1)}.
$$
For $\mu=0$ the discarding procedure is not employed, while for $\mu=1$ the maximum speed up is achieved. For $\mu>0$, a minimum number of particles $N_{\min}$ is set and the reducing procedure is adopted every $t_r$ iterations. For more practical detail, the interested reader may refer to \cite{fhps20-2}. In the experiments presented in \cref{tab:bench}, \rev{we set $\mu=0.1$ for $\delta=0.25$ and $\mu=0.03$ for $\delta=0.1$}, $t_r=10$ and $N_{\min}=10$. 

The initial distribution of the particles is uniform in the cube $[-1,1]^d$, while the initial number of particles is set to 2000. A rescaling strategy is adopted for the dynamics evolution: before computing the function values, the particles are rescaled into the benchmark research domain. For example, in the case of the Griewank function initially the candidates are uniformly drawn from $[-1,1]^d$: to compute the function values in these candidates the latter are rescaled into $[-600,600]^d$ and then these values are used in successive computation of $v_\alpha$ and $v_\beta$. 

\cref{tab:bench} shows that the success rate is very high and the error is very low for almost of the benchmark functions. The average number of particles decreases, reaching one tenth of the initial number in some cases, reducing overall both computational cost and time. \rev{Nonetheless, lowering the parameter $\mu$ induces a higher success rate even with a more strict criterion ($\delta=0.1$): this amounts to use a larger number of particles, but at the same time it lowers the number of iterations in most cases. The trade--off to be considered is between computational time and computational cost: this consideration should be done case by case, since it  depends on the function to minimize.} 

\subsection{\rev{Application} to a machine learning problem}

In the last test case, we apply the KBO technique to a classical problem of Machine Learning: the scope is to recognize  digital numbers contained in images of the MNIST data set, by using a shallow network
$$
f(x;W,b) = {\rm softmax} \lp{\rm ReLU} \lp Wx+b\rp\rp
$$
where $x\in\RR^{784}, W\in\RR^{10\times 784}$, $b\in\RR^{10}$. Moreover
$$
{\rm softmax}(x) = \frac{e^x_i}{\sum_i e^x_i}\,, \quad {\rm ReLU(x) = \max(0,x)}
$$
\AB{being ReLU the well--known Rectified Linear Unit function}. The training of the shallow network consists in minimizing the following function
$$
L(X,y;f)=\frac{1}{n}\sum_{i=1}^n \ell\lp f(X^{(i)};W,b), y^{i}\rp, \quad \ell(x,y) = -\sum_{i=1}^{10}y_i\log(x_i)
$$ 
where $X$ is the training dataset, whose images are vectorized ($\mathbb{R}^{28\times 28}\to \mathbb{R}^{784}$) and stacked column--wise. The function $\ell$ is the cross entropy. 

We adopt a minibatch strategy both for the training set and for the particles used in KBO. The former consists in the classical strategy, depicted also in Algorithm \ref{al:SGD1}, while the latter divides the particles set in $N_p/\AB{m_p}$ minibatches, where $N_p$ is the number of total particles and $\AB{m_p}$ is the number of particles in each batch. The KBO procedure is then iterated on the training batches. The final strategy is depicted in Algorithm \ref{al:MNIST}.
\begin{algorithm}[htb]
\begin{algorithmic}
\footnotesize
\STATE{\textit{Training Set and Labels}: $X\in\mathbb{R}^{784\times n}, y\in\mathbb{R}^{10\times n}$. Sets the number of epochs $E$ and the batchsize $m_t$.}
\STATE{\textit{Setting for KBO}: set $s = \lp\sigma_1, \sigma_2, \lambda_1, \lambda_2, \varepsilon, \alpha,\beta, T, \diff t = \varepsilon\rp$.}
\STATE{\textit{Initial candidates}: $W\in \mathbb{R}^{7840\times N_p}, b\in\RR^{10}$. Select the particles' batch size \AB{$m_p$}}
\STATE{Set $M=n/m_t$, $P=\AB{N_p/m_p}$.}
\FOR{$e=1,\ldots,E$}
\STATE{Reorganize the training set in $M$ batches: $B_1, B_2, \ldots, B_M$}
\FOR{$m=1,\ldots,M$}
\STATE{Reorganize the particles set in $P$ batches: \AB{$\mathcal{B}_1, \mathcal{B}_2, \ldots, \mathcal{B}_P$}}
\FOR{$k=1,\ldots,P$}
\STATE{$W_{\AB{\mathcal{B}}_k}, b_{\AB{\mathcal{B}}_k} \gets \mbox{KBO}(L(X_{B_m},y_{B_m};f), W_{\AB{\mathcal{B}}_k},b_{\AB{\mathcal{B}}}; s)$}
\ENDFOR
\ENDFOR
\ENDFOR
\end{algorithmic}
\caption{Nanbu KBO for ReLU network}\label{al:MNIST}
\end{algorithm}

At each epoch, the training dataset is shuffled in order to have different elements inside the batches. When exploring the current training batch, the particles are shuffled too. For our experiment, we used a dataset \footnote{\url{http://yann.lecun.com/exdb/mnist/}} with 10000 images, 1000 per class, for the training and 10000 images, 1000 per class, for validation. We compared the SGD method and KBO, both set with 20 epochs and minibatch size of 128; all the images in the training set have been normalized \AB{via zero centering and dividing by the standard deviation computed among the entire dataset}. The learning rate for SGD is set to $\gamma=0.1$, without momentum, with starting point randomly selected via a Gaussian distribution of zero mean and unitary variance. The settings for KBO is given by $\sigma_1 =\sigma_2=1, \lambda_1=\lambda_2=1$, $\varepsilon=\diff t = 0.1, \alpha=\beta=5\cdot 10^6$ and we selected $\AB{m_p=5}$ batches and $N_p = 500$ particles. The initial candidates are  randomly picked from a Gaussian Distribution with zero mean and unitary variance.

\begin{figure}[htb]
\begin{center}
\subfigure[KBO without particle reduction.\label{sfig:KOMnored}]{\includegraphics[width=0.5\textwidth]{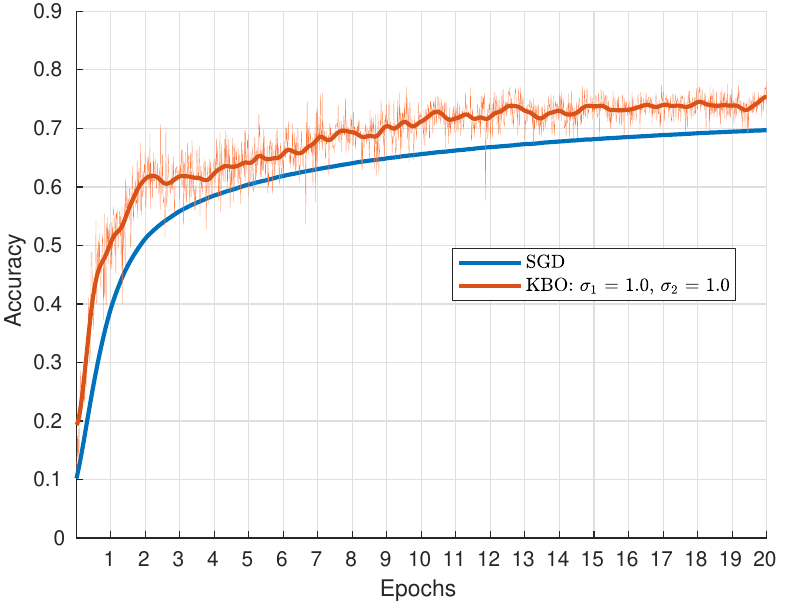}}\hfill\subfigure[KBO with particle reduction.\label{sfig:KOMsired}]{\includegraphics[width=0.5\textwidth]{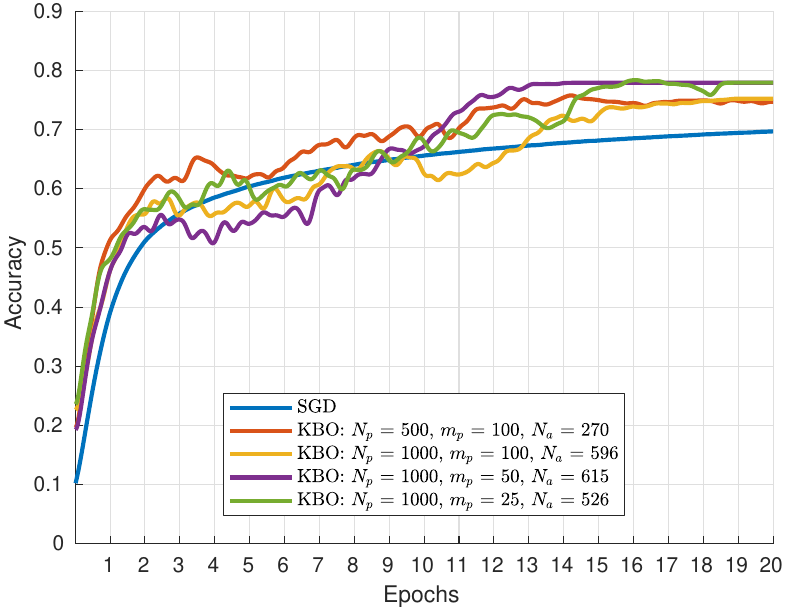}}
\end{center}
\caption{\AB{Performance comparison among SGD and KBO. The line referring to SGD shows the average over 500 simulations. The orange line refer to the KBO where both microscopic and macroscopic estimate are employed. The plot on the left depicts the performance of the KBO approach using $N_p=500$ without any particle reduction strategy (the solid line is a smooth representation of the shaded one), while the plot on the right refers to the adoption of \cref{eq:npartdec} with $\mu=0.1$ with different choices for particle numbers $N_p$ and particles' batch $m_p$. The average number of particles is denoted by $N_a$.}}
\label{fig:MNISTcomp}
\end{figure}

We run 500 simulations for SGD, since these runs are equivalent to one simulation of KBO with 500 particles. \cref{sfig:KOMnored} shows the accuracy obtained on the validation test all over the epochs. For computing the accuracy achieved by KBO, the parameters of the neural network are set as the macroscopic estimate reached at each iteration. The line referring to SGD corresponds to the average accuracy over the 500 simulations. \AB{In the numerical tests, the results obtained through the KBO method were shown to be superior in terms of accuracy to those obtained with classical SGD. A further test shows how the diminishing particle strategy depicted in \cref{eq:npartdec} is very effective even in this context: starting with 500 particles and setting $\mu=0.1$ ends the entire computation with just 270 particles, having a remarkable speed up in terms of computational time (see \cref{sfig:KOMsired}). Beside the diminishing strategy, several coupling of number of particles and batch size have been tested in \cref{sfig:KOMsired}: all of these setting lead to reliable results. Moreover, as already observed in \cref{subsec:compSGD}, SGD is quite sensitive to the starting point, whereas KBO is able to reach similar performances with different initializations as shown in \cref{fig:MNISTcomp2}.} 

\begin{figure}[htb]
\begin{center}
\includegraphics[width=0.5\textwidth]{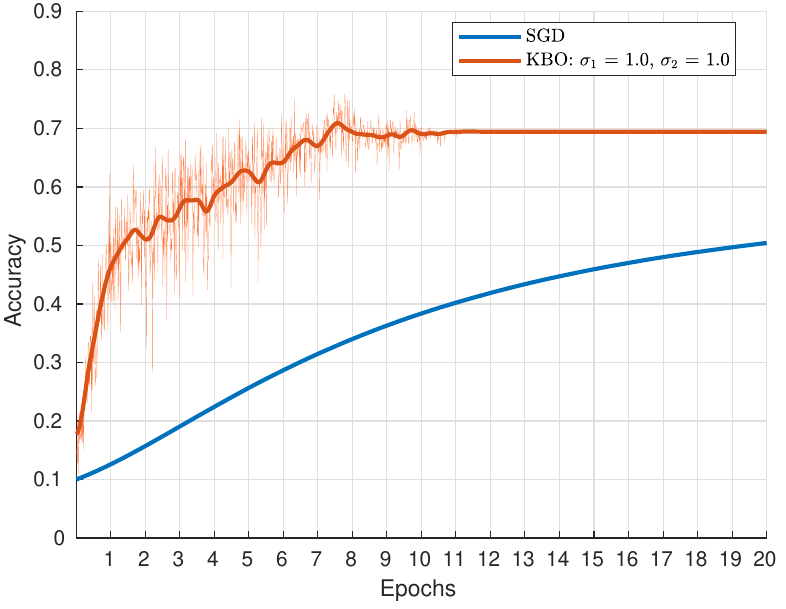}
\end{center}
\caption{\AB{Comparison of SGD and KBO performances when the starting point and the particles are randomly chosen as realizations of a Gaussian distribution of zero mean and standard deviation equal to 10. KBO is set to employ the strategy depicted in \cref{eq:npartdec} with $\mu=0.1$. The initial number of particles is $N_p=500$.}}
\label{fig:MNISTcomp2}
\end{figure}

\section{Conclusions}
In this work we have presented a new gradient free method based on a kinetic dynamics characterized by binary interactions between particles. Unlike previously introduced consensus-based optimization (CBO) methods, the binary interaction process in the limit of a large number of particles does not correspond to a mean-field dynamics but to a Boltzmann-type dynamics inspired by classical kinetic theory. To our knowledge these are the first metaheuristic algorithms based on a Boltzmann-like dynamics for the identification of the global minimum. Compared to CBO methods, the kinetic theory based optimization method (KBO) introduced here can be seen as a mathematical formalism related to the use of mini-batches of interacting particles of size $2$. The KBO method, uses both local binary information and global information to explore the search space. \rev{In both cases, we have been able to prove convergence to the global minimum under reasonable assumptions on the objective function using techniques inspired} by those introduced in \cite{carrillo2019consensus}. 

The numerical experiments reported have demonstrated the excellent performance of the KBO technique both in the case of high dimensional problems with benchmark test functions, and in the case of applications to machine learning.  It is remarkable that the method can achieve good success rates also in the cases where no global information is used in the dynamics, namely there is only limited communication restricted to particles interacting by pairs. In this case, convergence to the global minimum can be seen as an emerging phenomena of a very simple dynamic where particles are not forced to converge towards a collective estimate of the global minimum. 

\rev{On the other hand, from a mathematical viewpoint, the case with only local information is more difficult and convergence to global minimum requires more restrictive conditions on the parameters. These restrictions, however, become less stringent as soon as the method is used in combination with global information.} In the sequel we plan to address our attention more specifically to the analysis of the Monte Carlo algorithms used in the KBO implementation and to the possible extension of the present methodology to non homogeneous dynamics in the spirit of particle swarm optimization as in \cite{gp20}.

\section*{Acknowledgements} 
This work has been written within the
activities of GNCS groups of INdAM (National Institute of
High Mathematics). The support of MIUR-PRIN Project 2017, No. 2017KKJP4X “Innovative numerical methods for evolutionary partial differential equations and applications” is acknowledged. The work of G. Borghi is funded by the Deutsche Forschungsgemeinschaft (DFG, German Research Foundation) – Projektnummer 320021702/GRK2326 – Energy, Entropy, and Dissipative Dynamics (EDDy).

\appendix

\section{Test functions for global optimization}
In the sequel we report the test function for global optimization used in the numerical examples. For more detailed information, see \cite{JamilY13}.
\begin{itemize}
\item Sphere
$$
f(x) = \sum_{i=1}^{d} (x_i-b_i)^2
$$
where $b\in\RR^{d}$ is a random vector belonging to the hypercube $[-5,5]^{d}$. The minimum is achieved in $x^\star=0$ and $f(x^\star)=0$.
\item Styblinski-Tank function
$$
f(x) = \frac{1}{2}\sum_{i=1}^{d} (x_i^4 -16x_i^2+5x_i)
$$
whose minimizer is $x^\star=(-2.903534,\ldots,-2.903534)$ and $f(x^\star)=-39.16599d$. The function is evaluated in $[-5,5]^d$.
\item Ackley Function.
$$
f(x) =  -20\exp\lp -0.2\sqrt{\frac{1}{d}\sum_{i=1}^{d}x_i^2}\rp-\exp\lp\frac{1}{d}\sum_{i=1}^{d}\cos(2\pi x_i)\rp+ 20 +e
$$
whose sole minimizer is $x^\star=0$ and $f(x^\star)=0$. The function is evaluated in $[-32,32]^d$. 

\item Grienwank Function.
$$
f(x) = 1 + \sum_{i=1}^{d} \frac{x_i^{2}}{4000} - \prod_{i=1}^{d}\cos\lp\frac{x_i}{\sqrt{i}}\rp
$$
and $x^\star=0$, $f(x^\star)=0$. The function is evaluated in $[-600,600]^d$.

\item Negative Exponential function.
$$
f(x) = - \exp\lp-\frac12 \sum_{i=1}^d (x_i-b_i)^2\rp
$$
with $b\in\RR^d$. $x^\star = b$, $f(x^\star)=-1$, the function is evaluated in $[-5,5]^d$. 

\item Rastrigin function
$$
\label{eq:rastrigin}
f(x) = \frac1d\sum_{i=1}^d \lp x_i^2 -10\cos(2\pi x_i)\rp +10
$$
and $x^\star=0$, $f(x^\star)=0$. The function is evaluated in $[-5.12,5.12]^d$.



\item Schwefel 2.22 Function.
$$
f(x)=\sum_{i=1}^{d}|x_i|+\prod_{i=1}^{d}|x_i|
$$
its sole minimizer is $x^\star=0$ and $f(x^\star)=0$. The function is evaluated in $[-100,100]^d$.
\item Schwefel 2.23 Function.
$$
f(x) = \sum_{i=1}^{d}x_i^{10}
$$
whose minimizer is $x^\star=0$ and $f(x^\star)=0$. The function is evaluated in $[-100,100]^d$. 

\item Salomon function
$$
f(x) = 1-\cos\lp2\pi\sqrt{\sum_{i=1}^{d}x_i^2}\rp+0.1\sqrt{\sum_{i=1}^{d}x_i^2}
$$
with $x^\star= 0$ and $f(x^\star)=0$. The evaluation of this function is done in $[-100,100]^d$. 

\item Sum of squares
$$
f(x) = \sum_{i=1}^d ix_i^2
$$
whose sole minimizer is again the origin and the value in the minimizer is $0$. It is evaluated in $[-10,10]^d$.
%
\end{itemize}

\bibliographystyle{abbrv}
\bibliography{bibfile}

\end{document}